\let\Re\undefined
\let\Im\undefined
\DeclareMathOperator{\Re}{\mathscr{R}}
\DeclareMathOperator{\Im}{\mathscr{I}}
\DeclareMathOperator{\supp}{supp}
\newcommand{\E}{\mathbb{E}}
\newcommand{\C}{\mathbb{C}}
\renewcommand{\P}{\mathbb{P}}
\renewcommand\Re{\operatorname{Re}}
\renewcommand\Im{\operatorname{Im}}
\newcommand{\eps}{\varepsilon}
\renewcommand{\d}{\, d }
\def\Q{\mathbb{Q}}
\newcommand{\lp}{\log_+}
\newcommand{\lm}{\log_-}
\theoremstyle{plain}
  \newtheorem{theorem}{Theorem}[section]
  \newtheorem{lemma}[theorem]{Lemma}
  \newtheorem{proposition}[theorem]{Proposition}
\theoremstyle{definition}
  \newtheorem{definition}[theorem]{Definition}
  \newtheorem{example}[theorem]{Example}
\theoremstyle{remark}
  \newtheorem{remark}[theorem]{Remark}
\begin{document}

\title{Sums of random polynomials with differing degrees}

\author{Isabelle Kraus}
\address{Department of Mathematics\\ University of Colorado\\ Campus Box 395\\ Boulder, CO 80309-0395\\USA}
\email{isabelle.kraus@colorado.edu}

\author{Marcus Michelen}
\address{Department of Mathematics, Statistics, and Computer Science\\ University of Illinois at Chicago}
\email{michelen.math@gmail.com}
\thanks{M. Michelen has been supported in part by NSF grants DMS-2137623 and DMS-2246624.}

\author{Sean O'Rourke}
\address{Department of Mathematics\\ University of Colorado\\ Campus Box 395\\ Boulder, CO 80309-0395\\USA}
\email{sean.d.orourke@colorado.edu}
\thanks{S. O'Rourke has been supported in part by NSF grant DMS-1810500.}

\begin{abstract}
Let $\mu$ and $\nu$ be probability measures in the complex plane, and let $p$ and $q$ be independent random polynomials of degree $n$, whose roots are chosen independently from $\mu$ and $\nu$, respectively.  Under assumptions on the measures $\mu$ and $\nu$, the limiting distribution for the zeros of the sum $p+q$ was by computed by Reddy and the third author [J. Math. Anal. Appl. 495 (2021) 124719] as $n \to \infty$.  In this paper, we generalize and extend this result to the case where $p$ and $q$ have different degrees.   In this case, the logarithmic potential of the limiting distribution is given by the pointwise maximum of the logarithmic potentials of $\mu$ and $\nu$, scaled by the limiting ratio of the degrees of $p$ and $q$.  Additionally, our approach provides a complete description of the limiting distribution for the zeros of $p + q$ for any pair of measures $\mu$ and $\nu$, with different limiting behavior shown in the case when at least one of the measures fails to have a logarithmic moment.  
\end{abstract}

\maketitle

\section{Introduction} 

Given monic polynomials $p$ and $q$, what can be said about the roots of $p + q$?  While this question has been explored previously \cite{MR1188001,MR3196447,math/0612833,MR1923392,MR1908370,Marden,MR1911767,MR202981,Walsh,MR171902}, especially when $p$ and $q$ are deterministic, the goal of this paper is to settle this question for certain classes of random polynomials.

Motivated by the results of Reddy and the third author \cite{ORourke}, this paper focuses on a model of random polynomials with independent and identically distributed (iid) roots.  Namely, we consider monic polynomials (in a single complex variable) of the form 
\[ p_n(z) := \prod_{i=1}^n (z - X_i), \]
where the roots $X_1, \ldots, X_n$ are iid random variables in the complex plane.  Various properties of $p_n$ have been studied by a number of authors, see \cite{MR4136480,MR3896083,MR3689975,MR3363974,MR3283656,MR3940764,MR3698743,MR2970701,MR3318313,1801.08974} and references therein.

The results in \cite{ORourke} describe zeros of sums of random polynomials in this class.  Specifically, let 
\[ p_n(z) := \prod_{i=1}^n (z - X_i), \qquad q_n(z) := \prod_{j=1}^n (z - Y_j) \]
be two independent random polynomials of degree $n$, whose roots $X_1, \ldots, X_n$ and $Y_1, \ldots, Y_n$ are chosen independently from probability measures $\mu$ and $\nu$ in the complex plane, respectively.  The main results of \cite{ORourke} describe the limiting distribution for the zeros of the sum $p +q$ as $n \to \infty$ in terms of the logarithmic potentials of $\mu$ and $\nu$ under certain assumptions on the measures $\mu$ and $\nu$.  More generally, the results in \cite{ORourke} apply to sums of $m$ independent random polynomials when $m$ is fixed and $n$ tends to infinity.  In order to state these results, we must first introduce some definitions and notation.  

Let $\mathcal{P}(\mathbb{C})$ be the set of probability measures on $\mathbb{C}$. We let $\mathcal{P}_{\lp}(\mathbb{C})$ denote the set of $\mu \in \mathcal{P}(\mathbb{C})$ such that
\[ \int_{\mathbb{C}} \lp |w| \d \mu(w) < \infty, \]
where
\[ \lp x = \begin{cases} 
      0, & 0 \leq x \leq 1, \\
      \log x, & x \geq 1,
   \end{cases} \]
for $x \geq 0$.  
That is, $\mathcal{P}_{\lp}(\mathbb{C})$ consists of the probability measures on $\mathbb{C}$ which integrate $\log |\cdot|$ in a neighborhood of infinity.  
\begin{definition} \label{def:logpot}
The \textit{logarithmic potential} $U_\mu$ of $\mu \in \mathcal{P}_{\lp}(\mathbb{C})$ is the function $U_\mu: \mathbb{C} \to [-\infty, +\infty)$ defined for all $z \in \mathbb{C}$ by 
\[ U_\mu(z) := \int_\mathbb{C} \log |z-w| \ d\mu(w). \]
\end{definition}

For a measure $\mu \in \mathcal{P}(\mathbb{C})$, we let $\supp(\mu) \subset \mathbb{C}$ denote the support of $\mu$; $\mu$ is said to be compactly supported if $\supp(\mu)$ is compact.    
%
%
%
%
Let $\lambda$ denote the Lebesgue measure on $\mathbb{C}$, and let $C_{c}^\infty(\mathbb{C})$ denote the set of all smooth functions $\varphi: \mathbb{C} \to \mathbb{C}$ with compact support.

\begin{theorem}[Theorem 1.10 in \cite{ORourke}] \label{thm:RO}
Let $m \geq 2$ be a fixed integer, and assume $\mu_1, \ldots, \mu_m \in \mathcal{P}(\mathbb{C})$ have compact support.  Assume for each $1 \leq k \leq m-1$, the measure $\mu_k$ is not supported on a circle\footnote{A measure $\mu \in \mathcal{P}(\mathbb{C})$ is said to be supported on a circle if there exists $z_0 \in \mathbb{C}$ and $r \geq 0$ so that $\supp (\mu) \subset \{z \in \mathbb{C} : |z-z_0| = r\}$.}.  Let $\{X_{i,k} : 1 \leq k \leq m, i \geq 1\}$ be a collection of independent random variables so that $X_{i,k}$ has distribution $\mu_k$ for each $i \geq 1$.  For each $n \geq 1$, define the degree $n$ polynomials 
\[ p_{n,k}(z) := \prod_{i=1}^n (z - X_{i,k}), \qquad 1 \leq k \leq m. \]
Then there exists a (deterministic) probability measure $\rho$ on $\mathbb{C}$ so that, for any $\varphi \in C_c^\infty(\mathbb{C})$, 
\[ \frac{1}{n} \sum_{i=1}^n \varphi(z_i^{(n)}) \longrightarrow \int_{\mathbb{C}} \varphi \ d \rho \]
in probability as $n \to \infty$, where $z_1^{(n)}, \ldots, z_n^{(n)}$ are the zeros of the sum $\sum_{k=1}^m p_{n,k}$.  Here, $\rho$ depends only on $\mu_1, \ldots, \mu_m$ and is uniquely defined by the condition that 
\[ \int_{\mathbb{C}} \varphi \ d \rho = \frac{1}{2 \pi} \int_{\mathbb{C}} \Delta \varphi(z) \left( \max_{1 \leq k \leq m} U_{\mu_k}(z) \right) \ d \lambda (z) \quad \text{for all } \varphi \in C_c^\infty(\mathbb{C}). \]
\end{theorem}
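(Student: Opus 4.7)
I would follow the standard logarithmic potential approach. Let $\mu_n$ denote the empirical measure of the zeros $z_1^{(n)}, \ldots, z_n^{(n)}$ of $P_n := \sum_{k=1}^m p_{n,k}$. Since $\mu_n = \frac{1}{2\pi n} \Delta \log|P_n|$ in the distributional sense, proving $\mu_n \to \rho$ weakly in probability reduces (via the continuity of $\Delta$ acting on distributions) to showing that the random function
\[ U_n(z) := \frac{1}{n}\log|P_n(z)| \]
converges to $U(z) := \max_{1 \le k \le m} U_{\mu_k}(z)$ in $L^1_{\mathrm{loc}}(\mathbb{C})$ in probability. The limiting measure $\rho$ is then defined by $\rho = \frac{1}{2\pi}\Delta U$ in the distributional sense, which is the content of the formula in the theorem.

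\textbf{Pointwise convergence.} For each fixed $z \in \mathbb{C}$ I would first establish $U_n(z) \to U(z)$ in probability. Writing $U_{n,k}(z) := \frac{1}{n}\log|p_{n,k}(z)| = \frac{1}{n}\sum_{i=1}^n \log|z - X_{i,k}|$, the strong law of large numbers yields $U_{n,k}(z) \to U_{\mu_k}(z)$ almost surely for Lebesgue-a.e. $z$ (the integrability $\int \log|z-w|\,d\mu_k(w) > -\infty$ holds off a polar set by compactness of $\supp\mu_k$). The upper bound
\[ U_n(z) \le \frac{\log m}{n} + \max_{1 \le k \le m} U_{n,k}(z) \longrightarrow U(z) \]
is immediate from the triangle inequality. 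The lower bound is the substantive step: one must rule out catastrophic cancellation in $P_n(z) = \sum_k p_{n,k}(z)$. To that end I would prove an anti-concentration statement of the form
\[ \Prob\!\left( |P_n(z)| \le e^{-\eps n} \max_{1 \le k \le m} |p_{n,k}(z)| \right) \longrightarrow 0 \]
for every $\eps > 0$, which combined with the upper bound yields $U_n(z) \to U(z)$ in probability.

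\textbf{The main obstacle: anti-concentration.} This is where the hypothesis that $\mu_1,\ldots,\mu_{m-1}$ are not supported on a circle is used, and I expect this to be the hardest step. The idea is to condition on all the roots except those of one of the leading-magnitude polynomials --- say $p_{n,k^*}$ with $k^* \in \arg\max_k U_{\mu_k}(z)$, choosing $k^* \le m-1$ if possible --- and to show that even conditionally the argument of $p_{n,k^*}(z)$ is sufficiently spread out in $[0,2\pi)$ to force $|P_n(z)|$ to be comparable to $|p_{n,k^*}(z)|$ with high probability. Concretely, $\arg p_{n,k^*}(z) = \sum_{i=1}^n \arg(z - X_{i,k^*})\pmod{2\pi}$ is a sum of iid random variables whose distribution is not a single point mass (this is exactly the ``not on a circle'' hypothesis applied at the point $z$), so a Kolmogorov--Rogozin / Esseen-type small-ball inequality gives
\[ \sup_{\theta} \Prob\!\left(|\arg p_{n,k^*}(z) - \theta| \le \delta\right) = o(1) \]
for any fixed $\delta > 0$. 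From this and the concentration of $\log|p_{n,k^*}(z)|$ around $nU_{\mu_k^*}(z)$, the desired small-ball estimate on $|P_n(z)|$ follows. (The fact that the condition is only imposed on $\mu_1,\ldots,\mu_{m-1}$ reflects that one such ``spreadable'' index suffices.)

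\textbf{Upgrade to $L^1_{\mathrm{loc}}$ and conclusion.} Finally I would promote pointwise in-probability convergence to $L^1_{\mathrm{loc}}$ convergence via uniform integrability: the compact support of each $\mu_k$ confines the roots of $p_{n,k}$ to a fixed disk with overwhelming probability, so $U_n$ is uniformly bounded above on compact sets, while the lower-tail integrability $\int_K \lm |P_n(z)|/n\,d\lambda(z)$ can be controlled by a Jensen/potential-theoretic argument since $P_n$ is a polynomial of degree at most $n$ with leading coefficient $m$. Once $U_n \to U$ in $L^1_{\mathrm{loc}}$ in probability, applying $\frac{1}{2\pi}\Delta$ to both sides in the distributional sense yields $\mu_n \to \rho$ tested against $\varphi \in C_c^\infty(\mathbb{C})$, which is precisely the claimed convergence.
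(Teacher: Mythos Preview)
This statement is quoted from \cite{ORourke}; the present paper does not reprove it directly but establishes the strict generalization Theorem~\ref{thm:main}, which drops both the compact-support and the not-on-a-circle hypotheses. Your global architecture---reduce to $L^1_{\mathrm{loc}}$ convergence of $\frac{1}{n}\log|P_n|$, then pointwise convergence plus uniform integrability---is exactly the paper's. The divergence is in the anti-concentration step, and there your proposal carries a real error. The paper applies a one-dimensional small-ball bound (Theorem~\ref{thm:1D}) to the \emph{modulus}: conditioning on $p_{n,l}$ and using that $\log|p_{n,k}(z)| = \sum_i \log|z - X_{i,k}|$ is a sum of i.i.d.\ real variables, one gets $\P\bigl(\eps < |p_{n,k}(z)|/|p_{n,l}(z)| < \eps^{-1}\bigr) \to 0$ for every pair $k\neq l$, so with high probability a single term strictly dominates the sum and $|P_n(z)|$ is within a constant factor of $\max_k |p_{n,k}(z)|$. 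This needs only that $\log|z-X_{1,k}|$ be non-degenerate, which by Lemma~\ref{lem:non-constant} holds for a.e.\ $z$ whenever $\mu_k$ is non-degenerate; the circle hypothesis never enters. Your assertion that non-degeneracy of $\arg(z-X)$ ``is exactly the not-on-a-circle hypothesis applied at the point $z$'' is false: $\arg(z-X)$ degenerates when $\supp\mu$ lies on a \emph{ray} from $z$, whereas it is $|z-X|$ that degenerates when $\supp\mu$ lies on a \emph{circle} centered at $z$. The circle condition governs the modulus, not the argument. An argument-based route might be repairable, but you have invoked the wrong hypothesis for it and would additionally need to handle the correlation between $|p_{n,k^*}(z)|$ and $\arg p_{n,k^*}(z)$.

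Your uniform-integrability sketch is also too thin on the lower tail, which is where the content lies. The paper (Section~\ref{sec:tightness}, adapting Kabluchko) bounds $\frac{1}{n^2}\int_{B(r)}\log^2|P_n|$ via the Poisson--Jensen formula on a larger disk $B(R)$: the Poisson integral is bounded above by Markov's inequality and from below by feeding the already-proved pointwise convergence at the single point $z=0$ back into Jensen's formula (Lemmas~\ref{lem:I-0} and~\ref{lem:I-gen}); the Blaschke-type sum over the zeros in $B(R)$ is then handled deterministically in $L^2(B(r))$. Your phrase ``a Jensen/potential-theoretic argument since $P_n$ is a polynomial of degree at most $n$ with leading coefficient $m$'' points in the right direction but does not yet produce the lower bound; the missing idea is that one must first anchor $\frac{1}{n}\log|P_n|$ from below at \emph{some} fixed point, and that is precisely the pointwise lemma reused.
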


\subsection{Contributions of this paper}

Our main results generalize Theorem \ref{thm:RO} in several key ways:
\begin{itemize}
\item We allow the polynomials $p_{n,1}$, \ldots, $p_{n,m}$ to have have different degrees.  That is, we consider polynomials of the form 
\[ p_{n,k}(z) := \prod_{i=1}^{n_k} (z - X_{i,k}), \qquad 1 \leq k \leq m, \]
where $\{n_1\}_{n \geq 1}, \ldots, \{n_m\}_{n \geq 1}$ are sequences of natural numbers, indexed by $n$ (so that $n_k = n_k(n)$).  We assume that $n_1 = n \geq n_k$ for each $2 \leq k \leq m$ and all natural numbers $n$.  Our main result does not require that the sequences $\{n_2\}_{n \geq 1}, \ldots, \{n_m\}_{n \geq 1}$ tend to infinity with $n$; in fact, even when these sequences do not tend to infinity, they can still influence the limiting distribution of the zeros of the sum (see Example \ref{ex:main} below).  
\item Theorem \ref{thm:RO} makes two key assumptions about the measure $\mu_1, \ldots, \mu_m$: it requires the measures be compactly supported and not supported on circles.  These technical assumptions were required due to the proof method used in \cite{ORourke}.  The proof given in this paper is substantially different than that given in \cite{ORourke}, and we do not require any assumptions about the support of the measures.  In particular, we observe a new phenomenon, where the behavior of the roots of the sum depends on how heavy-tailed the measures $\mu_1, \ldots, \mu_m$ are (see Section \ref{sss:heavy}). 
\end{itemize}

\subsection{Main results}
Our main results are divided into two theorems: the first theorem captures the behavior of the zeros of the sum when $\mu_1, \ldots, \mu_m \in \mathcal{P}_{\lp}(\mathbb{C})$ (which we call the \emph{light-tailed case}) and the second case describes a different behavior when $\mu_k \in \mathcal{P}(\mathbb{C}) \setminus \mathcal{P}_{\lp}(\mathbb{C})$ for some $k$ (which we call the \emph{heavy-tailed case}).  We begin with a definition.

\begin{definition}[Weak convergence of (random) probability measures]
Let $\{\rho_n\}_{n \geq 1}$ be a sequence of deterministic probability measures on $\mathbb{C}$, and let $\rho \in \mathcal{P}(\mathbb{C})$ be deterministic.  We say \emph{$\rho_n$ converges weakly to $\rho$} if, for all continuous and bounded functions $\varphi: \mathbb{C} \to \mathbb{C}$, 
\begin{equation} \label{eq:weakconv}
	\int_{\mathbb{C}} \varphi \d \rho_n  \longrightarrow \int_{\mathbb{C}} \varphi \d \rho 
\end{equation}
as $n \to \infty$.   We say a sequence $\{\rho_n\}_{n \geq 1}$ of random probability measures on $\mathbb{C}$ \emph{converges weakly in probability} (respectively \emph{almost surely}) to a deterministic measure $\rho \in \mathcal{P}(\mathbb{C})$ if, for each continuous and bounded function $\varphi: \mathbb{C} \to \mathbb{C}$, the convergence in \eqref{eq:weakconv} holds in probability (respectively almost surely) as $n \to \infty$.    
\end{definition}

\subsubsection{The light-tailed case}
Let $\mu_1, \ldots, \mu_m \in \mathcal{P}_{\lp}(\mathbb{C})$, and let $\{X_{i, k} : 1 \leq k \leq m, i \geq 1\}$ be a collection of independent complex-valued random variables so that $X_{i, k}$ has distribution $\mu_k$ for all $1 \leq k \leq m$ and $i \geq 1$.  
We consider polynomials of the form 
\begin{equation} \label{eq:polys}
	p_{n, k}(z) := \prod_{i=1}^{n_k} (z - X_{i, k} ), \qquad 1 \leq k \leq m, 
\end{equation} 
where $\{n_1\}_{n \geq 1}, \ldots, \{n_m\}_{n \geq 1}$ are sequences of natural numbers, indexed by $n$ (so that $n_k = n_k(n)$).  We assume $n_1 := n \geq n_k$ for all $2 \leq k \leq m$ and for every natural number $n$.  

Our main result below describes the limiting distribution of the zeros of the sum $\sum_{k=1}^m p_{n,k}$.  By the assumptions above, it follows that $\sum_{k=1}^m p_{n,k}$ is always a degree $n$ polynomial and so has $n$ zeros, which we denote as $z_1^{(n)}, \ldots, z_n^{(n)}$.  We let
\begin{equation} \label{eq:rhon}
	\rho_n := \frac{1}{n} \sum_{i=1}^n \delta_{z_i^{(n)}} 
\end{equation} 
be the empirical measure constructed from the zeros of the sum $\sum_{k=1}^m p_{n,k}$; here, $\delta_z$ denotes the  point mass at $z$ in the complex plane.  
Unlike the case when all the polynomials have the same degree (Theorem \ref{thm:RO}), the case when the polynomials have different degrees requires several new parameters.  These new parameters are $c_1, \ldots, c_m \in [0,1]$ and are defined by the limiting ratio of the degrees:
\begin{equation} \label{eq:limits}
	c_k := \lim_{n \to \infty} \frac{n_k}{n}, \qquad 1 \leq k \leq m, 
\end{equation} 
where we assume the limits in \eqref{eq:limits} exist.  In particular, it is always the case that $c_1 = 1$ since $n_1 := n$ for all natural numbers $n$.  As an example, if $m = 3$ and we have $n_1 = n$, $n_2 = \lceil \sqrt{n} \rceil$, and $n_3 = \lceil n/2 \rceil$  for all $n$, then $c_1 = 1$, $c_2 = 0$, and $c_3 = 1/2$.

Our first main result below shows that the limiting distribution for $\rho_n$ depends only on the measures $\mu_1, \ldots, \mu_m$ and the limiting ratios $c_1, \ldots, c_m$.

\begin{theorem}[Main result: light-tailed case] \label{thm:main}
Let $m \geq 2$ be a fixed integer, and assume $\mu_1, \ldots, \mu_m \in \mathcal{P}_{\lp}(\mathbb{C})$.  Let $\{X_{i,k} : 1 \leq k \leq m, i \geq 1\}$ be a collection of independent random variables so that $X_{i,k}$ has distribution $\mu_k$ for each $i \geq 1$.  
For each $n \geq 1$, define the polynomials $p_{n,k}$ as in \eqref{eq:polys}, 
where $\{n_1\}_{n \geq 1}, \ldots, \{n_m\}_{n \geq 1}$ are sequences of natural numbers, indexed by $n$, so that $n_1 := n \geq n_k$ for each $2 \leq k \leq m$ and for all natural numbers $n$.  In addition, assume the limits in \eqref{eq:limits} hold for some $c_1, \ldots, c_m \in [0, 1]$; for each $1 \leq k \leq m$, if the sequence $\{n_k\}_{n \geq 1}$ is unbounded, assume $\mu_k$ is non-degenerate.  For each $n \geq 1$, let $\rho_n$ be the empirical measure defined in \eqref{eq:rhon}, where $z_1^{(n)}, \ldots, z_{n}^{(n)}$ are the zeros of the sum $\sum_{k=1}^m p_{n,k}$. 
Then there exists a (deterministic) probability measure $\rho$ on $\mathbb{C}$ so that $\rho_n$ converges weakly to $\rho$ in probability as $n \to \infty$.  Here, $\rho$ depends only on $\mu_1, \ldots, \mu_m$ and $c_1, \ldots, c_m$ and is uniquely defined by the condition that 
\begin{equation}\label{eq:main}
\int_\mathbb{C} \varphi \ d\rho = \frac{1}{2\pi}\int_\mathbb{C} \Delta \varphi(z)\left( \max_{1 \leq k \leq m} {c_k}U_{\mu_k}(z) \right)\ d\lambda(z) \quad \text{ for all } \varphi \in C_c^\infty(\mathbb{C}),
\end{equation}
where we use the convention that ${c_k}U_{\mu_k}(z) = 0$ for all $z \in \mathbb{C}$ if $c_k = 0$.
\end{theorem}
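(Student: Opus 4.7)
The plan is to work with the logarithmic potentials $U_n(z) := \tfrac{1}{n}\log|P_n(z)|$, where $P_n := \sum_{k=1}^m p_{n,k}$, and deduce weak convergence of $\rho_n$ from $L^1_{\mathrm{loc}}$ convergence of $U_n$. Since $\int \varphi\d\rho_n = \tfrac{1}{2\pi}\int \Delta\varphi\cdot U_n\d\lambda$ for every $\varphi \in C_c^\infty(\mathbb{C})$, it suffices to show $U_n \to u$ in $L^1_{\mathrm{loc}}(\mathbb{C})$ in probability, where $u(z) := \max_{1\le k \le m} c_k U_{\mu_k}(z)$. Taking the distributional Laplacian then gives $\rho_n \to \rho := \tfrac{1}{2\pi}\Delta u$ weakly in probability; that $\rho$ is a probability measure follows from the asymptotic $u(z) = \log|z| + O(1)$ as $|z| \to \infty$, using $c_1 = 1$.

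At any $z$ where $\log|z-\cdot| \in L^1(\mu_k)$ for all $k$ (a set of full Lebesgue measure, by the assumption $\mu_k \in \mathcal{P}_{\lp}(\mathbb{C})$ combined with local integrability of $\log|\cdot|$), the law of large numbers yields $\tfrac{1}{n_k}\log|p_{n,k}(z)| \to U_{\mu_k}(z)$ in probability whenever $n_k \to \infty$; when $\{n_k\}$ is bounded, $\tfrac{1}{n}\log|p_{n,k}(z)| \to 0 = c_k U_{\mu_k}(z)$ trivially. In either case, $\tfrac{1}{n}\log|p_{n,k}(z)| \to c_k U_{\mu_k}(z)$ in probability at a.e.\ $z$. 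The elementary bound $|P_n(z)| \le m\max_k|p_{n,k}(z)|$ then gives the upper bound $\limsup_n U_n(z) \le u(z)$ in probability at a.e.\ $z$.

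The main obstacle is the matching lower bound at points where the maximum defining $u(z)$ is attained by several indices, since the tied set can have positive Lebesgue measure (e.g.\ when $\mu_j = \mu_k$ and $c_j = c_k$). For $z$ with a unique maximizer $k^\star$, the reverse triangle inequality together with the exponential domination $|p_{n,k}(z)|/|p_{n,k^\star}(z)| = e^{-\Omega(n)}$ (in probability) for $k \neq k^\star$ yields $U_n(z) \to u(z)$. For $z$ at which the set of maximizers $K \subseteq \{1,\ldots,m\}$ has size $|K| \ge 2$, the plan is to prove an anti-concentration estimate of the form
\[
\P\Bigl( \Bigl| \sum_{k \in K} p_{n,k}(z) \Bigr| \le e^{n u(z) - \epsilon n} \Bigr) = o(1) \quad \text{for each } \epsilon > 0.
\]
One picks $k_0 \in K$ with $n_{k_0} \to \infty$ (such a $k_0$ exists since $c_{k_0} > 0$) and conditions on $\{p_{n,k}(z) : k \in K \setminus\{k_0\}\}$, reducing the task to proving that the single random variable $p_{n,k_0}(z) = \prod_{i=1}^{n_{k_0}}(z - X_{i,k_0})$ is unlikely to lie within an exponentially small disk around any prescribed target. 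The non-degeneracy of $\mu_{k_0}$ ensures $\log|z - X_{1,k_0}|$ has positive variance, so by the central limit theorem $|p_{n,k_0}(z)|$ is spread over a logarithmic window of length $\Theta(\sqrt{n_{k_0}})$, while the phase $\arg p_{n,k_0}(z) \pmod{2\pi}$ equidistributes, yielding the required anti-concentration.

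Finally, one upgrades pointwise-in-probability convergence at a.e.\ $z$ to $L^1_{\mathrm{loc}}$ convergence in probability. For any bounded Borel set $B$ the deterministic lower bound $\int_B U_n(z)\d\lambda(z) \ge -C_B$ holds, because $P_n$ is monic of degree $n$ and $\inf_{w \in \mathbb{C}} \int_B \log|z-w|\d\lambda(z) > -\infty$. An a.s.\ matching upper bound follows from the LLN-based control of $\int_B \max_k\tfrac{1}{n}\log|p_{n,k}(z)|\d\lambda(z)$. Combined with the subharmonicity of $U_n$, a standard subsequence/Fubini argument (extracting an almost-sure pointwise-a.e.\ convergent sub-subsequence and invoking standard convergence results for subharmonic functions with uniformly bounded $L^1_{\mathrm{loc}}$ norm) yields $L^1_{\mathrm{loc}}$ convergence of $U_n$ to $u$ in probability, completing the proof. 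The central difficulty is the anti-concentration analysis on the tied set, where the non-degeneracy hypothesis on the measures $\mu_k$ with $n_k \to \infty$ is essential; this is the step that departs most substantially from the equal-degree argument of \cite{ORourke}.
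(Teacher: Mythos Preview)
Your overall strategy matches the paper's: reduce weak convergence of $\rho_n$ to convergence of $U_n(z)=\tfrac1n\log|P_n(z)|$ to $u(z)=\max_k c_kU_{\mu_k}(z)$, prove pointwise convergence in probability via the law of large numbers plus an anti-concentration step, then upgrade to an integrated statement. The differences are in the technical implementation, and two of your choices need repair.

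\medskip
\textbf{Anti-concentration.} Your proposed tool (``by the central limit theorem\ldots positive variance'') is not justified: under the hypothesis $\mu_k\in\mathcal P_{\log_+}(\mathbb C)$, the variable $\log|z-X_{1,k}|$ need not have finite variance, so the CLT may fail. The paper uses instead the Kolmogorov--Rogozin type bound (Theorem~2.22 in Petrov): for non-degenerate i.i.d.\ real $Y_j$, $\sup_u\mathbb P(|\sum_{j\le n}Y_j-u|\le t)\to0$. This requires only non-degeneracy of $\log|z-X_{1,k}|$, which is exactly what the hypothesis provides (after excising a null set of $z$). The paper also organizes the argument more cleanly than your tied/untied dichotomy: rather than showing that $\sum_{k\in K}p_{n,k}(z)$ avoids an exponentially small disk, it shows that for each pair $k\ne l$ with $n_k,n_l\to\infty$ one has $\mathbb P(\varepsilon<|p_{n,k}(z)|/|p_{n,l}(z)|<\varepsilon^{-1})=o(1)$, so with high probability a single term dominates all others by a fixed constant factor and $\log|P_n(z)|=\log\max_k|p_{n,k}(z)|+O(1)$. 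This handles the tied case automatically and dispenses with any phase-equidistribution argument.

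\medskip
\textbf{Showing $\rho(\mathbb C)=1$.} Your claim ``$u(z)=\log|z|+O(1)$ as $|z|\to\infty$'' is false in general: if some $\mu_k$ has an atom at $a$ with $|a|$ large, then $U_{\mu_k}(z)\to-\infty$ as $z\to a$, and no uniform lower bound on $u(z)-\log|z|$ holds. The paper instead works with circular means $m(u,R)=\tfrac1{2\pi}\int_0^{2\pi}u(Re^{i\theta})\,d\theta$ and uses the Riesz-type identity $\int_{\mathbb C}\psi_R\,d\rho=m(u,R)-m(u,1)$ (with $\psi_R$ a logarithmic cutoff) to obtain $\rho(\mathbb C)=\lim_{R\to\infty}m(u,R)/\log R$; since $u\ge U_{\mu_1}$ and the same identity with $\mu_1$ gives $\lim m(U_{\mu_1},R)/\log R=1$, one gets $\rho(\mathbb C)\ge1$, while $\rho(\mathbb C)\le1$ follows from $\rho$ being a weak limit of probability measures.

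\medskip
\textbf{The $L^1_{\mathrm{loc}}$ upgrade.} Here you and the paper genuinely diverge. The paper proves $L^2$ uniform integrability of $U_n$ on balls via the Poisson--Jensen formula (following Kabluchko) and then invokes a Tao--Vu dominated convergence lemma. Your route via precompactness of subharmonic families with bounded $\int U_n^+$ and $\int U_n$ bounded below is a legitimate alternative (H\"ormander, \emph{Analysis of Linear PDO I}, Theorem~4.1.9), and arguably more conceptual; you should note, however, that $P_n$ is not monic (its leading coefficient lies in $\{1,\dots,m\}$), though this is harmless. The paper's Poisson--Jensen route is more explicit and avoids the subsequence/Fubini bookkeeping, at the cost of a longer computation.
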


We make a few remarks concerning Theorem \ref{thm:main}.  First, in the case where $n_1 = n_2 = \cdots = n_m = n$, it follows that $c_1 = c_2 = \cdots = c_m = 1$, and we recover a generalized version of Theorem \ref{thm:RO} which makes no assumptions about the supports of the measures $\mu_1, \ldots, \mu_m$.  Second, the defining relation for $\rho$ given in \eqref{eq:main} implies that the function $U(z) := \max_{1 \leq k \leq m} {c_k}U_{\mu_k}(z)$ is the logarithmic potential of $\rho$. In fact, we can write \eqref{eq:main} as 
\[ \rho = \frac{1}{2 \pi} \Delta U, \]
where the Laplacian is interpreted in the distributional sense (see Section 3.7 in  \cite{Ransford}). 
Third, while it might be tempting to conjecture that only the highest degree polynomials in the sum affect the limiting distribution, as the following example shows, this is not the case.

\begin{example}\label{ex:main}
Consider Theorem \ref{thm:main} in the case where $m = 2$, $\mu_1$ is the uniform probability measure on the unit disk in the complex plane centered at the origin, and $\mu_2$ is the uniform probability measure on the unit disk centered at $2$. Let $n_1 := n$ for each $n \in \mathbb{N}$, and let $n_2$ be any sequence of natural numbers indexed by $n$ chosen so that $\lim_{n \to \infty} \frac{n_2}{n} = 0$ (in particular, this includes the case where $n_2$ is constant). Define the polynomials $p_{n,1}$ and $p_{n,2}$ as in  \eqref{eq:polys}.  

Since the degree of $p_{n,2}$ is significantly smaller than the degree of $p_{n,1}$, it might be natural to conjecture that the limiting distribution for the roots of the sum $p_{n,1} + p_{n,2}$ is given by $\mu_1$.  However, this is not the case (even when $n_2 = 1$ for all $n$).  In this case, the limiting distribution for the zeros of $p_{n,1} + p_{n,2}$ is given by the uniform probability measure on the unit circle centered at the origin.  
To see this, note that for every $z \in \mathbb{C}$, 
$$ U_{\mu_1}(z) =  \begin{cases} 
      \log|z|, & \text{ if } |z| > 1, \\
      \frac{1}{2}(|z|^2-1), & \text{ if } |z| \leq 1, \\
 \end{cases}$$
see for instance \cite{Saff}, and
$$ c_2 U_{\mu_2}(z) = 0$$
since $c_2 = 0.$
It follows that 
$$U(z) := \max\{U_{\mu_1}(z), 0\} =  
\begin{cases} 
      \log|z|, & \text{ if } |z| > 1, \\
      0, & \text{ if } |z| \leq 1. \\
\end{cases}$$
Using the mean value property for harmonic functions, $U$ can be seen to be equal almost everywhere to the logarithmic potential of the uniform probability measure on the unit circle centered at the origin.  Thus, by uniqueness of the logarithmic potential (see, for instance, Lemma 4.1 from \cite{Bordenave}), it follows that the resultant measure $\rho$ given in Theorem \ref{thm:main} is the uniform probability measure on the unit circle centered at the origin.  A numerical simulation of this example is shown in Figure \ref{fig:light_tail}.  
\end{example}

\begin{figure}
    \centering
    \includegraphics[width=0.95\textwidth]{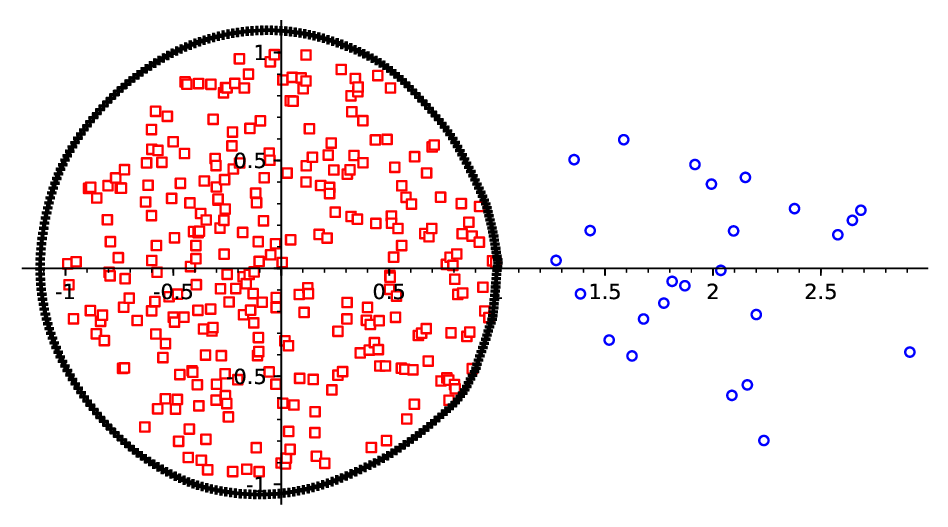}
    \caption{A numerical simulation of Example \ref{ex:main}. The red squares represent the roots of $p_{n,1}$, which are uniform on the unit disk centered at the origin. In this simulation, $p_{n,1}$ has degree $300$. The blue circles represent the roots of $p_{n,2}$, which are uniform on the unit disk centered at $2$. In this simulation, $p_{n,2}$ has degree $25$. The black crosses represent the roots of the sum $p_{n,1} + p_{n,2}$.}
    \label{fig:light_tail}
\end{figure}

\begin{example}
Consider Theorem \ref{thm:main} in the case where $m = 2$, $\mu_1$ is the uniform probability measure on the unit circle in the complex plane centered at the origin, and $\mu_2$ is the uniform probability measure on the circle centered at the origin with radius $r \geq 1$.  Let $n_1 := n$ for each $n \in \mathbb{N}$, and let $n_2$ be any sequence of natural numbers indexed by $n$ chosen so that $\lim_{n \to \infty} \frac{n_2}{n} = c \in [0, 1]$. Define the polynomials $p_{n,1}$ and $p_{n,2}$ as in  \eqref{eq:polys}.  

It follows from the mean value property for harmonic functions that 
$$ U_{\mu_1}(z) =  \begin{cases} 
      \log |z|, & \text{ if } |z| > 1, \\
      0, & \text{ if } |z| \leq 1, \\
 \end{cases}$$
and
$$ U_{\mu_2}(z) = \begin{cases} 
      \log |z|, & \text{ if } |z| > r, \\
      \log r, & \text{ if } |z| \leq r. \\
 \end{cases}$$
Thus, we have that
$$U(z) := \max\{U_{\mu_1}(z), c U_{\mu_2}(z) \} =  
\begin{cases} 
      \log|z|, & \text{ if } |z| > r^c, \\
      \log r^c , & \text{ if } |z| \leq r^c. \\
\end{cases}$$
Therefore, it follows again from the mean value property for harmonic functions that the resultant measure $\rho$ given in Theorem \ref{thm:main} is the uniform probability measure on the circle of radius $r^c$ centered at the origin.  
\end{example}

\subsubsection{Heavy-tailed case}\label{sss:heavy}
We now consider the case when one of the measures $\mu_1, \ldots, \mu_m$ is not in $\mathcal{P}_{\lp}(\mathbb{C})$.  For simplicity, we focus on the case of only two polynomials, both having degree $n$.  To this end, let $\mu \in \mathcal{P}(\mathbb{C}) \setminus \mathcal{P}_{\lp}(\mathbb{C})$ and $\nu \in \mathcal{P}(\mathbb{C})$.  For each $n \geq 1$, define the random polynomials
\begin{equation} \label{eq:pnqn}
	p_n(z) := \prod_{j=1}^n (z - X_j), \qquad q_n(z) := \prod_{j=1}^n (z - Y_j), 
\end{equation}
where $X_1, Y_1, X_2, Y_2, \ldots$ is a sequence of independent random variables so that $X_j$ has distribution $\mu$ and $Y_j$ has distribution $\nu$ for each $j \geq 1$.  We are again interested in the the roots $z_1^{(n)}, \ldots, z_n^{(n)}$ of the sum $p_n + q_n$.  
In this case, the limiting behavior of the roots is determined by whichever measure, $\mu$ or $\nu$, has heavier tails.  
To this end, recall that the Mahler measure $M(f)$ of a monic polynomials $f(z) = \prod_{j=1}^n (z - \alpha_j)$ is given by 
\begin{equation} \label{eq:def:Mf}
	M(f) := \prod_{j=1}^n \max\{ |\alpha_j|, 1\}, 
\end{equation} 
and define 
\begin{equation} \label{eq:Sn}
	S_n := \frac{1}{n} \left( \log M(p_n) - \log M(q_n) \right) 
\end{equation} 
to be the normalized difference of the logarithmic Mahler measures of $p_n$ and $q_n$.  Notice that $\log M(p_n)$ is the sum of iid random variables.  However, the law of large numbers is not applicable here since $\mu \not\in \mathcal{P}_{\lp}(\mathbb{C})$.  

It follows from the results in \cite{MR336806, MR266315} that $S_n$ satisfies exactly one of the following three cases:
\begin{enumerate}[(i)]
\item \label{item:infty} with probability $1$, $\lim S_n = +\infty$;
\item \label{item:-infty} with probability $1$, $\lim S_n = -\infty$; or
\item \label{item:both} with probability $1$, $\limsup S_n = +\infty$ and $\liminf S_n = -\infty$.
\end{enumerate}
We refer to these three possibilities as cases \eqref{item:infty}, \eqref{item:-infty}, and \eqref{item:both}.  
Our main result in this setting is the following.

\begin{theorem}[Main result: heavy-tailed case] \label{thm:nolog}
Let $\mu \in \mathcal{P}(\mathbb{C}) \setminus \mathcal{P}_{\lp}(\mathbb{C})$ and $\nu \in \mathcal{P}(\mathbb{C})$.  Let $X_1, Y_1, X_2, Y_2, \ldots$ be a sequence of independent random variables so that $X_j$ has distribution $\mu$ and $Y_j$ has distribution $\nu$ for each $j \geq 1$.  For each $n \geq 1$, define the polynomials $p_n$ and $q_n$ as in \eqref{eq:pnqn}, and let 
\[ \rho_n := \frac{1}{n} \sum_{j=1}^n \delta_{z^{(n)}_j} \]
be the empirical measure constructed from the roots $z_1^{(n)}, \ldots, z_n^{(n)}$ of $p_n + q_n$.  The following convergence results hold depending on whether 
\[ S_n := \frac{1}{n} \left( \log M(p_n) - \log M(q_n) \right) \]
satisfies case \eqref{item:infty}, \eqref{item:-infty}, or \eqref{item:both}:
\begin{enumerate}
\item If case \eqref{item:infty} holds, then $\rho_n$ converges weakly almost surely to $\mu$ as $n \to \infty$.
\item If case \eqref{item:-infty} holds, then $\rho_n$ converges weakly almost surely to $\nu$ as $n \to \infty$.
\item If case \eqref{item:both} holds, then, with probability $1$, there is a subsequence along which $\rho_n$ converges weakly to $\mu$ and a separate subsequence along which $\rho_n$ converges weakly to $\nu$.  
\end{enumerate}
\end{theorem}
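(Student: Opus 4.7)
The plan is to prove convergence of $\rho_n$ via the distributional Laplacian of logarithmic potentials.  Because $\mu$ (and possibly $\nu$) lacks a finite $\lp$-moment, the usual log potential $U_\mu$ is infinite, so I use the renormalized log potential
\[ V_\mu(z) := \int_{\C}\bigl(\log|z-w| - \lp|w|\bigr)\,d\mu(w), \]
which is finite Lebesgue-a.e.\ in $z$ (for large $|w|$ the integrand is the bounded $\log|1-z/w|$; for small $|w|$, it is $\log|z-w|$, locally integrable in $z$), is subharmonic in $z$, and satisfies $\Delta V_\mu = 2\pi\mu$ distributionally (by a direct Fubini calculation: the $\lp|w|$ piece is $z$-independent and vanishes under $\Delta$).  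The aim is to prove
\[ w_n(z) := \frac{1}{n}\log|p_n(z) + q_n(z)| - \frac{1}{n}\log M(p_n) \longrightarrow V_\mu(z) \quad \text{in } L^1_{\mathrm{loc}}(\C) \text{ almost surely} \]
in case \eqref{item:infty}; taking distributional Laplacians then yields $\rho_n \to \mu$ vaguely, and combined with tightness this gives weak convergence.  Case \eqref{item:-infty} is symmetric (swap $\mu$ and $\nu$), and case \eqref{item:both} is handled by extracting random subsequences along which $S_n \to \pm\infty$ and applying the previous cases.

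The key technical input is a SLLN for the centered log-modulus of $p_n$: for iid $X_j \sim \mu \in \mathcal{P}(\C)$,
\[ u_n(z) := \frac{1}{n}\sum_{j=1}^n \bigl(\log|z-X_j| - \lp|X_j|\bigr) \longrightarrow V_\mu(z) \]
in $L^1_{\mathrm{loc}}(\C)$ a.s.  Pointwise a.e.\ convergence follows from the classical SLLN (the summands are iid with finite mean $V_\mu(z)$ for Lebesgue-a.e.\ $z$, even without a $\lp$-moment on $\mu$) together with a Fubini argument to swap the ``almost every $z$'' and ``almost surely'' quantifiers.  Upgrading to $L^1_{\mathrm{loc}}$ uses the standard subharmonic convergence theorem: a sequence of subharmonic functions that converges pointwise a.e.\ and is locally uniformly bounded above converges in $L^1_{\mathrm{loc}}$.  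The upper bound is immediate since $\log|z-X_j| - \lp|X_j| \leq \log(1 + \sup_{z \in K}|z|)$ on any compact $K \subset \C$, uniformly in $X_j$.  Define $v_n$ analogously for $q_n$ and $\nu$.

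For case \eqref{item:infty}, decompose
\[ w_n(z) = u_n(z) + \frac{1}{n}\log\bigl|1 + q_n(z)/p_n(z)\bigr|. \]
For a.e.\ $z$, $\frac{1}{n}\log|q_n(z)/p_n(z)| = v_n(z) - u_n(z) - S_n \to -\infty$ because $S_n \to +\infty$ while $u_n(z), v_n(z)$ converge (hence remain bounded in $n$) for a.e.\ $z$; thus $|q_n(z)/p_n(z)| \to 0$ and $\frac{1}{n}\log|1 + q_n/p_n|(z) \to 0$, so $w_n(z) \to V_\mu(z)$ pointwise a.e.  The subharmonic function $w_n$ is locally uniformly bounded above via
\[ w_n(z) \leq \max\bigl(u_n(z),\, v_n(z) - S_n\bigr) + \frac{\log 2}{n}, \]
together with the deterministic local upper bounds on $u_n, v_n$ from the previous step and $S_n \to +\infty$.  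The subharmonic convergence theorem then yields $w_n \to V_\mu$ in $L^1_{\mathrm{loc}}$ a.s., completing the log-potential step.

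The main obstacle I anticipate is tightness: upgrading the vague convergence of $\rho_n$ (obtained by passing $\Delta$ through the $L^1_{\mathrm{loc}}$ limit) to weak convergence requires ruling out escape of mass to infinity for the roots of $p_n + q_n$, which is delicate because even $p_n$ and $q_n$ individually have heavy-tailed root distributions.  I would address this via a Rouché-type pairing of each zero of $p_n + q_n$ with a nearby zero of $p_n$ in case \eqref{item:infty} — so that tightness of $\rho_n$ inherits from the tightness of the empirical measure of iid $\mu$-samples — or via a Cauchy-style coefficient bound on roots in terms of the Mahler measures.  A secondary subtlety is that for the SLLN to apply pointwise in $z$, one needs $\mu(\{z\}) = 0$; but the set of atoms of $\mu$ is countable and hence Lebesgue-null, so this is harmless.
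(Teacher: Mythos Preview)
Your approach is correct and genuinely different from the paper's.  The paper bypasses potential theory entirely: it proves a deterministic circle estimate (for each rational center $x$ and Lebesgue-a.e.\ radius $r$, almost surely $\tfrac{1}{n}\max_{|z|=r}\bigl|\log|p_n(x+z)|-\log M(p_n)\bigr|$ stays bounded, and likewise for $q_n$), and then applies Rouch\'e's theorem directly on a $\pi$-system of balls to show that $p_n+q_n$ and $p_n$ have the same zero count in each such ball once $S_n$ is large.  Weak convergence then follows by comparing with the empirical measure of $X_1,\ldots,X_n$.  Your route---renormalized potentials $V_\mu$, pointwise SLLN, subharmonic compactness to upgrade to $L^1_{\mathrm{loc}}$, and then the distributional Laplacian---is closer in spirit to the light-tailed argument and avoids the circle estimate entirely.

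Your worry about tightness is unfounded, and this is worth noting because it means your argument is already complete without any Rouch\'e input.  Once you have $w_n\to V_\mu$ in $L^1_{\mathrm{loc}}$ almost surely, you get $\int\varphi\,d\rho_n\to\int\varphi\,d\mu$ for every $\varphi\in C_c^\infty(\C)$, hence (by density) for every $\varphi\in C_c(\C)$.  But vague convergence of probability measures to a \emph{probability} measure automatically upgrades to weak convergence: given $\varepsilon>0$, pick a compact $K$ with $\mu(K)>1-\varepsilon$ and $\psi\in C_c$ with $\mathbf{1}_K\le\psi\le 1$; then $\rho_n(\operatorname{supp}\psi)\ge\int\psi\,d\rho_n\to\int\psi\,d\mu>1-\varepsilon$, so $\{\rho_n\}$ is tight, and Prokhorov finishes.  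So your proposed Rouch\'e ``pairing'' for tightness is unnecessary---which is fortunate, since making it rigorous would require exactly the paper's circle estimate and would render the potential-theoretic half of your argument redundant.  The trade-off: the paper's Rouch\'e proof is more elementary and gives slightly more (it localizes the zeros of $p_n+q_n$ near those of $p_n$), while your argument is cleaner once one has the subharmonic compactness machinery and treats the heavy-tailed case in parallel with the light-tailed one.
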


In cases \eqref{item:infty} and \eqref{item:-infty}, one of the Mahler measures dominates the other, and Theorem \ref{thm:nolog} shows that the zeros of the sum behave like those of the dominating polynomial; this behavior is not seen in Theorem \ref{thm:main} when the measures $\mu_1, \ldots, \mu_m$ have lighter tails.  Interestingly, in case \eqref{item:both} when $\mu \neq \nu$, Theorem \ref{thm:nolog} shows that, with probability $1$, $\rho_n$ does not converge weakly.  A numerical simulation of Theorem \ref{thm:nolog} is shown in Figure \ref{fig:heavy_tail}.  

Theorem \ref{thm:nolog} is only stated for the case of two polynomials ($m = 2$) where both polynomials have the same degree.  {When the polynomials no longer have the same degree or $m \geq 3$ one can no longer guarantee the three simple cases of (i), (ii) and (iii).  One can still apply Proposition \ref{pr:deterministic}---or a natural generalization to the $m \geq 3$ case---to handle specific cases when $m \geq 3$ or the polynomials are of different degrees, but we make no effort to classify all cases as we do in Theorem \ref{thm:nolog}.  As an example, if $m \geq 3$ and one Mahler measure dominates all others, then one can adapt Proposition \ref{pr:deterministic} to show that the corresponding measure of the dominant  Mahler measure determines the limit.}


\begin{figure}
    \centering
    \includegraphics[width=0.85\textwidth]{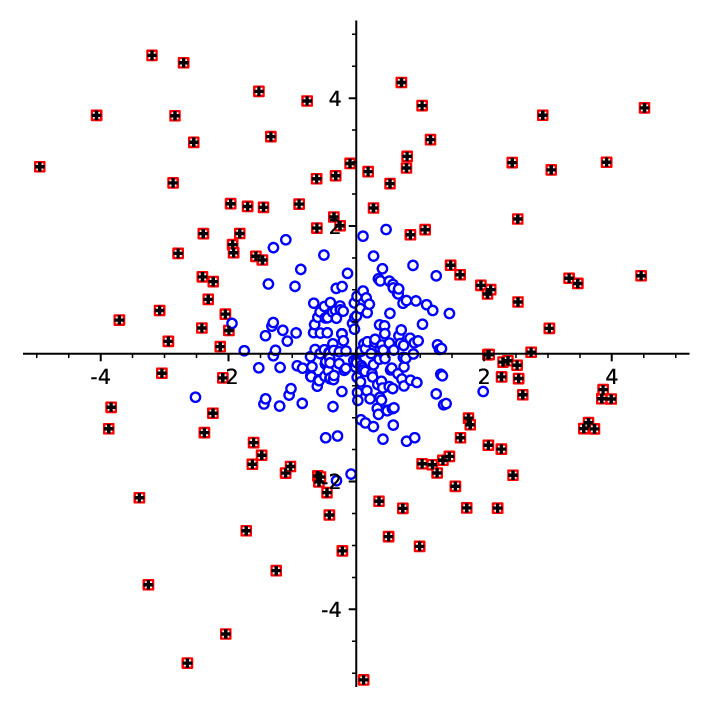}
    \caption{A numerical simulation of Theorem \ref{thm:nolog}.  The red squares represent the roots of $p_n$, which are chosen with respect to a rotationally symmetric distribution that does not have a logarithmic moment.  The blue circles are the roots of $q_n$, which are chosen according to the standard complex Gaussian distribution.  Both polynomials have degree $200$. The black crosses represent the roots of the sum $p_n + q_n$.  The image was cropped and does not display the largest roots (in magnitude) of $p_n$ or $p_n + q_n$. }
    \label{fig:heavy_tail}
\end{figure}

\subsection{Outline of the paper}
The rest of the paper is devoted to the proofs of Theorems \ref{thm:main} and \ref{thm:nolog}.  An overview of the proofs---along with a description of all the notation used in the paper---is presented in Section \ref{sec:overview}.  The main technical lemmas which establish Theorem \ref{thm:main} are presented in Sections \ref{sec:point} and \ref{sec:tightness}.  Theorem \ref{thm:nolog} is proved in Section \ref{sec:nolog}.  The appendix contains some auxiliary results.  

\subsection*{Acknowledgments}
The authors thank the anonymous referees for comments and corrections.  The third author thanks Magdalena Czubak and Andrew Campbell for useful conversations.

\section{Notation and an overview of the proofs} \label{sec:overview}

In this section, we give an overview of the proofs of Theorems \ref{thm:main} and \ref{thm:nolog}.  We begin with a description of the notation used throughout the paper.  

\subsection{Notation}
Unless otherwise noted, asymptotic notation (such as $O, o, \ll$) is used under the assumption that $n \to \infty$.  For example, $X=O(Y)$ and $X \ll Y$ denote the bound $|X| \leq CY$ for some constant $C > 0$ and all $n > C$, where $C$ is independent of $n$. Subscripts such as $X=O_k(Y)$ indicate that the constant $C$ depends on some parameter $k$, but $C$ may depend on the measures $\mu_1, \ldots, \mu_m$ (alternatively, $\mu$ and $\nu$) without denoting this dependence.  We use $X = o(Y)$ if $|X| \leq a_n Y$ for some sequence $\{a_n\}$ such that $\lim_{n \to \infty} a_n = 0$.  

We define the open disk of radius $r > 0$ centered at $z \in \mathbb{C}$ to be 
\[ B(z,r) := \{ w \in \mathbb{C} : |z - w| < r \}, \]
and set 
$B(r) := B(0, r)$.    We let $\partial B$ be the boundary of $B \subset \mathbb{C}$.  We will use $i$ to denote the imaginary unit. We also sometimes use $i$ as index; the reader will be able to tell the difference between these two uses of $i$ by context.  

 For a finite set $S$, we let $|S|$ denote its cardinality.  For $n \in \mathbb{N}$, $[n] = \{1, \ldots, n\}$ is the discrete interval.  
 
We let $\log(\cdot)$ be the natural logarithm. Let
\[ \lm x = \begin{cases} 
      |\log x|, & 0 \leq x \leq 1, \\
      0, & x \geq 1, 
   \end{cases}
   \quad\text {and }\quad
   \lp x = \begin{cases} 
      0, & 0 \leq x \leq 1, \\
      \log x, & x \geq 1,
   \end{cases} \]
denote the negative and positive parts of the logarithm.  Note that $\lm 0 = + \infty$.  

For a monic polynomial $f$, $M(f)$ is its Mahler measure, defined in \eqref{eq:def:Mf}.  
   
Let $\mathcal{P}(\mathbb{C})$ be the set of probability measures on $\mathbb{C}$.  We let $\mathcal{P}_{\lp}(\mathbb{C})$ denote the set of $\mu \in \mathcal{P}(\mathbb{C})$ such that
\[ \int_{\mathbb{C}} \lp |w| \d \mu(w) < \infty. \] 
Let $C_{c}^\infty(\mathbb{C})$ denote the set of all smooth functions $\varphi: \mathbb{C} \to \mathbb{C}$ with compact support; $\supp(\varphi)$ will denote the support of $\varphi$.  The Lebesgue measure on $\mathbb{C}$ is denoted by $\lambda$.  Unless otherwise noted, ``almost all'' and ``almost everywhere'' are with respect to the Lebesgue measure $\lambda$.  When a sequence of probability measures or random variables is tight, we will sometimes say the sequence is ``bounded in probability.''

\subsection{Overview of the proof of Theorem \ref{thm:nolog}}

The proof of Theorem \ref{thm:nolog} is based on Rouch\'e's theorem (see Exercise 24 in Chapter 10 of \cite{MR924157} for the general form of Rouch\'e's theorem used in the proof).  Suppose case \eqref{item:infty} holds. Recall that this means, with probability $1$, $\lim_{n \to \infty} S_n = + \infty$, where $S_n$ is defined in \eqref{eq:Sn}.  This means the Mahler measure of $p_n$ dominates the Mahler measure of $q_n$.  Since the Mahler measure is formed from the roots, we use this domination to show that
\begin{equation} \label{eq:domination}
	|p_n(z)| > |q_n(z)| 
\end{equation} 
for a sufficiently rich class of points $z \in \mathbb{C}$ and all $n$ sufficiently large.  In fact, for a class of Borel sets $B \subset \mathbb{C}$, we show that \eqref{eq:domination} holds for all $z \in \partial B$.  Hence, by Rouch\'e's theorem, $p_n$ and $p_n + q_n$ have the same number of zeros in $B$.  Since the empirical measure constructed from the roots of $p_n$ converges weakly almost surely to $\mu$ as $n \to \infty$, it will follow that $\rho_n$ also converges weakly to $\mu$ almost surely.  Case \eqref{item:-infty} follows a similar argument.  

When case \eqref{item:both} holds, with probability $1$, there is a subsequence under which $S_n$ converges to $+\infty$ and a separate subsequence under which $S_n$ converges to $-\infty$.  We apply a similar Rouch\'e's theorem argument as above to each of these subsequences.  The proof of Theorem \ref{thm:nolog} is presented in Section \ref{sec:nolog}.

\subsection{Proof of Theorem \ref{thm:main}}
We now outline the proof of Theorem \ref{thm:main} and its main technical lemmas.  In fact, using these lemmas, we will complete the proof of Theorem \ref{thm:main}; the technical lemmas are proved later in Sections \ref{sec:lpbound}, \ref{sec:point}, and \ref{sec:tightness}.  

For any smooth and compactly supported function $\varphi:\mathbb{C} \to \mathbb{C}$, it follows from Section 2.4.1 of \cite{HKYV} that
\begin{equation} \label{eq:HKYV}
\sum_{i=1}^n \varphi(z_i^{(n)}) = \frac{1}{2\pi} \int_{\mathbb{C}} \Delta \varphi(z) \log \left| \sum_{k=1}^m p_{n,k}(z) \right| \d \lambda(z). 
\end{equation}
Our goal is to show that
\begin{equation} \label{eq:convtoshow}
\frac{1}{ n} \int_{\mathbb{C}} \Delta \varphi(z) \log \left| \sum_{k=1}^m p_{n,k}(z) \right| \d \lambda(z) \longrightarrow \int_\mathbb{C} \Delta \varphi(z)\left( \max_{1 \leq k \leq m} {c_k}U_{\mu_k}(z) \right)\ d\lambda(z) 
\end{equation} 
in probability as $n \to \infty$.  This convergence will be established via the following dominated convergence theorem due to Tao and Vu \cite{TVcirc}.  

\begin{lemma}[Dominated convergence; Lemma 3.1 from \cite{TVcirc}] \label{lemma:dominated}
Let $(X, \rho)$ be a finite measure space.  For integers $n \geq 1$, let $f_n: X \to \mathbb{R}$ be random functions which are jointly measurable with respect to $X$ and the underlying probability space.  Assume that:
\begin{enumerate}
\item (uniform integrability) there exists $\delta > 0$ such that $\int_X |f_n(x)|^{1 + \delta} \d \rho(x)$ is bounded in probability (resp., almost surely);
\item \label{item:pointwise}(pointwise convergence) for $\rho$-almost every $x \in X$, $f_n(x)$ converges in probability (resp., almost surely) to zero.
\end{enumerate}
Then $\int_X f_n(x) \d \rho(x)$ converges in probability (resp., almost surely) to zero.  
\end{lemma}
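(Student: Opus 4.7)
The plan is to handle the two cases (almost sure and in probability) separately, but both arguments hinge on the same truncation idea: the $(1+\delta)$-moment bound controls the contribution of large values of $f_n$ via Chebyshev-type inequalities, while the pointwise convergence controls the bounded part.

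For the almost sure case, I would first use Fubini to move from the hypothesis ``for $\rho$-a.e.\ $x$, $f_n(x) \to 0$ almost surely'' to the joint statement ``for $\Prob$-a.e.\ $\omega$, $f_n(\cdot,\omega) \to 0$ for $\rho$-a.e.\ $x$.'' Intersecting with the full-measure event on which $C(\omega) := \sup_n \int_X |f_n(x,\omega)|^{1+\delta} \d \rho(x) < \infty$, I would work pathwise: for each such $\omega$, the family $\{f_n(\cdot,\omega)\}_n$ is uniformly integrable on $(X,\rho)$, because for any $M > 0$,
\[
\int_{\{|f_n(\cdot,\omega)| > M\}} |f_n(x,\omega)| \d \rho(x) \;\le\; M^{-\delta} \int_X |f_n(x,\omega)|^{1+\delta} \d \rho(x) \;\le\; C(\omega) M^{-\delta}.
\]
Combined with the pointwise $\rho$-a.e.\ convergence, Vitali's convergence theorem gives $\int_X f_n(x,\omega)\d\rho(x) \to 0$ for $\Prob$-a.e.\ $\omega$.

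For the in-probability case, I would run a quantitative version of the same truncation. Fix $\eps, \eta > 0$. By tightness of $\int_X |f_n|^{1+\delta}\d\rho$, pick $K$ with $\Prob(\int_X |f_n|^{1+\delta}\d\rho > K) < \eta/2$ for all $n$, and then $M$ so large that $K M^{-\delta} < \eps/2$. Splitting $f_n = f_n \mathbf{1}_{|f_n| \le M} + f_n \mathbf{1}_{|f_n| > M}$, the tail term satisfies $\int |f_n|\mathbf{1}_{|f_n|>M}\d\rho \le \eps/2$ with probability at least $1 - \eta/2$. For the bounded part, pointwise convergence in probability together with the uniform bound $M$ gives $\E|f_n(x)\mathbf{1}_{|f_n(x)|\le M}| \to 0$ for $\rho$-a.e.\ $x$ by the bounded convergence theorem on the probability space. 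Since $\rho$ is finite and these expectations are uniformly bounded by $M$, another application of bounded convergence on $(X,\rho)$ (justified by Fubini) yields $\E \int_X |f_n(x)|\mathbf{1}_{|f_n(x)|\le M}\d\rho(x) \to 0$, hence $\Prob(\int |f_n|\mathbf{1}_{|f_n|\le M}\d\rho > \eps/2) < \eta/2$ for $n$ large. Summing the two events gives $\Prob(|\int f_n \d\rho| > \eps) < \eta$ for all large $n$.

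The subtle step, and where I would be most careful, is the use of Fubini and the bounded convergence theorem in the in-probability case, where one is mixing randomness in $\omega$ with integration over $x$; one must verify joint measurability (which is exactly what the hypothesis provides) and the finiteness of $\rho$ to swap limits and integrals. An alternative route that avoids these quantitative estimates is the subsequence principle: one could reduce convergence in probability to showing that every subsequence has a further subsequence converging almost surely, and then after extracting a subsequence along which both the $(1+\delta)$-moment bound holds a.s.\ and the pointwise convergence holds a.s.\ (via a diagonal argument combined with Fubini), apply the almost-sure case. Both approaches are routine given the hypotheses; the truncation route has the advantage of being quantitative and more directly useful for the estimates that appear later in Sections \ref{sec:point} and \ref{sec:tightness}.
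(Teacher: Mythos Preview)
The paper does not supply its own proof of this lemma; it is simply quoted from Tao and Vu \cite{TVcirc} and then applied. Your argument is correct in both cases: the almost-sure case via Fubini and Vitali is standard, and the in-probability case via truncation plus iterated bounded convergence (or alternatively the subsequence reduction you sketch) is exactly how one reproduces the Tao--Vu lemma, so there is nothing to compare against here.
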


In view of Lemma \ref{lemma:dominated}, the two main steps of the proof of Theorem \ref{thm:main} are contained in the following lemmata.  

\begin{lemma}\label{lem:point}
There is a measurable set $F \subset \C$ with $\lambda(F) = 0$ so that, for all $z \in \C \setminus F$, $\max_{1 \leq k \leq m} {c_k}U_{\mu_k}(z)$ is finite and 
\[ \frac{1}{n} \log \left| \sum_{k=1}^m p_{n,k}(z) \right| \longrightarrow \max_{1 \leq k \leq m} {c_k}U_{\mu_k}(z) \]
in probability as $n \to \infty$.
\end{lemma}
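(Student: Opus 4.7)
Let $F$ be the union, over $k$ with $\mu_k \in \mathcal{P}_{\lp}(\mathbb{C})$, of the sets $\{z \in \mathbb{C} : \int \log_-|z-w|\,d\mu_k(w) = \infty\}$. By Fubini, each such set has Lebesgue measure zero, so $\lambda(F)=0$. Since $\log_+|z-w|\leq \log 2 + \log_+|z| + \log_+|w|$ and $\mu_k \in \mathcal{P}_{\lp}(\mathbb{C})$, every $U_{\mu_k}(z)$ is finite for $z \notin F$, and hence so is $\max_k c_k U_{\mu_k}(z)$.

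\textbf{Step 1 (per-polynomial asymptotics).} Fix $z \notin F$. For each $k$, I claim $\tfrac{1}{n}\log|p_{n,k}(z)| \to c_k U_{\mu_k}(z)$ in probability. When $n_k \to \infty$, the iid summands $\log|z-X_{i,k}|$ have finite mean $U_{\mu_k}(z)$, so the strong law gives $\tfrac{1}{n_k}\log|p_{n,k}(z)| \to U_{\mu_k}(z)$ almost surely, and multiplying by $n_k/n \to c_k$ completes the claim. When $\{n_k\}_{n\geq 1}$ is bounded, $\log|p_{n,k}(z)|$ is a sum of a bounded number of almost surely finite iid variables and is therefore tight, yielding $\tfrac{1}{n}\log|p_{n,k}(z)|\to 0 = c_k U_{\mu_k}(z)$ in probability.

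\textbf{Step 2 (matching bounds for the sum).} Put $M := \max_k c_k U_{\mu_k}(z)$. The upper bound is immediate: $|\sum_k p_{n,k}(z)| \leq m\max_k|p_{n,k}(z)|$ combined with Step 1 gives $\limsup \tfrac{1}{n}\log|\sum_k p_{n,k}(z)| \leq M$ in probability. For the lower bound, let $S := \{k : c_kU_{\mu_k}(z) = M\}$. If $|S|=1$ with a strict gap $\delta > 0$ to the next largest, then the dominant term $|p_{n,k^*}(z)|$ exceeds $m\max_{k\neq k^*}|p_{n,k}(z)| \leq me^{n(M-\delta/2)}$ with probability $\to 1$, so $|\sum_k p_{n,k}(z)| \geq \tfrac{1}{2}|p_{n,k^*}(z)|$ and Step 1 finishes. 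When $|S|\geq 2$, pick $k_0 \in S$ and condition on all random variables except $X_{1,k_0}$; writing $Q := \prod_{i\geq 2}(z-X_{i,k_0})$, we have $\sum_k p_{n,k}(z) = Q\,(w_0 - X_{1,k_0})$ for some conditional center $w_0 \in \mathbb{C}$. By Step 1 applied to $Q$, $\tfrac{1}{n}\log|Q|\to M$ in probability, so the event $\tfrac{1}{n}\log|\sum_k p_{n,k}(z)| < M-\varepsilon$ is essentially the event $|X_{1,k_0}-w_0|<e^{-n\varepsilon/2}$; its conditional probability equals $\mu_{k_0}(B(w_0, e^{-n\varepsilon/2}))$, and taking expectations and invoking dominated convergence reduces matters to showing that $\mu_{k_0}(\{w_0\}) = 0$ almost surely. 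This last point uses that $w_0$ is built from variables independent of $X_{1,k_0}$ and that $\mu_{k_0}$ is non-degenerate, so the random $w_0$ avoids the (countable) atoms of $\mu_{k_0}$ with full probability.

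\textbf{Main obstacle.} The hard step is the lower bound in the tied case: one must rule out catastrophic cancellation among $\{p_{n,k}(z)\}_{k\in S}$ even when the measures $\mu_k$ are permitted to be purely atomic. The conditional reduction above is clean when $\mu_{k_0}$ is absolutely continuous, but in the atomic regime one must verify that the random center $w_0$ avoids the support of $\mu_{k_0}$'s atomic part with probability tending to $1$. This is where the non-degeneracy hypothesis on $\mu_{k_0}$ and the independence of the conditioning samples must be leveraged carefully, and it is the technical crux of the lemma.
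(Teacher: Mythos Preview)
Your exceptional set $F$ is too small, and this creates a genuine counterexample to the claim. Take $m=2$, $n_1=n_2=n$, and $\mu_1=\mu_2$ uniform on $\{-1,1\}$. At $z=0$ one has $\int\log_-|0-w|\,d\mu_k(w)=0<\infty$, so $0\notin F$ in your definition. But $p_{n,1}(0),p_{n,2}(0)\in\{-1,1\}$, so $p_{n,1}(0)+p_{n,2}(0)=0$ with probability $1/2$; hence $\tfrac1n\log|p_{n,1}(0)+p_{n,2}(0)|$ does not converge in probability to $M=0$. The paper's $F$ also includes the set $E_{\mu_k}$ of points $z$ where $\log|z-X_{1,k}|$ is degenerate (here, the imaginary axis), precisely to avoid this.

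Even after enlarging $F$, your tied-case argument is not complete and the reduction you describe is not valid as written. You write that dominated convergence reduces matters to showing $\mu_{k_0}(\{w_0\})=0$ almost surely, but $w_0=w_0^{(n)}=z+R_n/Q_n$ varies with $n$, so there is no fixed target for $B(w_0^{(n)},e^{-n\varepsilon/2})$ to shrink around. What you would actually need is anti-concentration of $w_0^{(n)}$ at every atom of $\mu_{k_0}$ at scale $e^{-n\varepsilon/2}$, and you give no argument for this; extracting a single coordinate $X_{1,k_0}$ does not provide enough randomness to force that conclusion when $\mu_{k_0}$ is atomic.

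The paper sidesteps the unique-maximizer/tied dichotomy entirely. After restricting to $z$ where each $\log|z-X_{1,k}|$ is non-degenerate, it applies an anti-concentration inequality for sums of i.i.d.\ real variables (Kolmogorov--Rogozin type) to $\log|p_{n,k}(z)|=\sum_{i=1}^{n_k}\log|z-X_{i,k}|$, obtaining $\mathbb{P}(\varepsilon<|p_{n,k}(z)|/|p_{n,l}(z)|<1/\varepsilon)\to 0$ for every pair with $n_k\to\infty$. Thus with probability tending to one there is always a single term exceeding all others by a factor $>2m$, so $\bigl|\sum_k p_{n,k}(z)\bigr|=(1+o(1))\max_k|p_{n,k}(z)|$, and the law of large numbers finishes. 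This uses the randomness of all $n_k$ roots rather than just one, which is exactly the leverage your single-variable conditional argument lacks.
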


\begin{lemma}\label{lem:tightness}
For each $r > 0$, the sequence 
\[ \left\{ \frac{1}{n^2} \int_{B(r)} \left(\log \left| \sum_{k=1}^m p_{n,k}(z) \right|\right)^2 \d\lambda(z) \right\}_{n \geq 1} \] 
is bounded in probability.  
\end{lemma}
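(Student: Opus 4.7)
The plan is to write $P_n := \sum_{k=1}^m p_{n,k}$ and to decompose $(\log|P_n|)^2 = (\lp|P_n|)^2 + (\lm|P_n|)^2$ (the two summands have disjoint supports), then control each integral separately. The structural observation I would rely on is that $P_n$ is a polynomial of degree exactly $n$ with leading coefficient $c_n \in \{1, \ldots, m\}$ (namely, the number of indices $k$ with $n_k = n$), which lets me factor $P_n(z) = c_n \prod_{j=1}^n (z - z_j^{(n)})$. This factorization will yield a deterministic bound on the negative-part integral via the roots of the sum itself, while the positive part will be handled using only the first logarithmic moment of each $\mu_k$.

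For the negative part, since $c_n \geq 1$, one has
\[ \lm|P_n(z)| \leq \sum_{j=1}^n \lm|z - z_j^{(n)}|. \]
Applying Cauchy--Schwarz and then the elementary fact that $\int_{B(r)} (\lm|z - w|)^2 \d\lambda(z)$ is bounded by a constant $C_r$ uniformly in $w \in \mathbb{C}$ (the integrand vanishes for $|w| > r + 1$ and otherwise has only an integrable square-logarithmic singularity at $z = w$), I obtain $\int_{B(r)} (\lm|P_n|)^2 \d\lambda \leq C_r n^2$. Thus the negative-part contribution to $\frac{1}{n^2}\int_{B(r)}(\log|P_n|)^2\d\lambda$ is bounded \emph{deterministically}, with no probabilistic input at all.

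For the positive part, the triangle inequality $|P_n| \leq m\max_k|p_{n,k}|$ combined with $\lp(ab) \leq \lp a + \lp b$ and $\lp(a + b) \leq \log 2 + \lp a + \lp b$ yields, for every $z \in B(r)$,
\[ \lp|P_n(z)| \leq \log m + \sum_{k=1}^m\sum_{i=1}^{n_k}\bigl(\log 2 + \lp r + \lp|X_{i,k}|\bigr). \]
Since $\mu_k \in \mathcal{P}_{\lp}(\mathbb{C})$, I would apply the weak law of large numbers to $\frac{1}{n_k}\sum_i \lp|X_{i,k}|$ along any subsequence where $n_k \to \infty$, together with the trivial almost-sure finiteness of finite sums of almost-surely finite random variables when $\{n_k\}_{n \geq 1}$ is bounded, to conclude that $\frac{1}{n}\sum_{i=1}^{n_k}\lp|X_{i,k}|$ is bounded in probability. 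Hence $\sup_{z \in B(r)} \lp|P_n(z)|/n$ is bounded in probability, and so
\[ \frac{1}{n^2}\int_{B(r)}(\lp|P_n(z)|)^2\d\lambda(z) \leq \lambda(B(r)) \sup_{z \in B(r)}\bigl(\lp|P_n(z)|/n\bigr)^2 \]
is bounded in probability as well.

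The main obstacle I anticipate is the absence of a second logarithmic moment assumption, which rules out the naive pointwise-in-$z$ bound $\mathbb{E}[(\lp|P_n(z)|)^2] = O(n^2)$ since $\mathbb{E}[(\lp|X_{1,k}|)^2]$ may be infinite. The workaround of paying a factor of $\lambda(B(r))$ to trade a pointwise moment bound for a supremum-in-$z$ bound sidesteps this issue and reduces the probabilistic input to the first-moment control $\frac{1}{n}\sum_i \lp|X_{i,k}| = O(1)$ in probability, which is exactly what membership in $\mathcal{P}_{\lp}(\mathbb{C})$ provides.
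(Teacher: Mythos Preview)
Your proof is correct and takes a genuinely different, more elementary route than the paper's. The paper applies the Poisson--Jensen formula on a disk $B(R)\supset B(r)$, writing $\log|P_n(z)|=I_n(z;R)+\sum_\ell \log\bigl|R(z-y_\ell^{(n)})/(R^2-\overline{y_\ell^{(n)}}z)\bigr|$; the Blaschke-type sum is bounded deterministically, while the Poisson integral $I_n(z;R)$ is handled by separate upper- and lower-bound lemmas (Lemmas~\ref{lem:I-UB}--\ref{lem:I-tight}), the lower bound ultimately relying on the pointwise convergence Lemma~\ref{lem:point} at $z=0$. Your argument replaces all of this with the single structural observation that $P_n(z)=c_n\prod_{j=1}^n(z-z_j^{(n)})$ with $1\le c_n\le m$; the inequality $\lm|P_n(z)|\le\sum_j\lm|z-z_j^{(n)}|$ then gives the deterministic bound $\int_{B(r)}(\lm|P_n|)^2\,d\lambda\le \tfrac{\pi}{2}n^2$ after Cauchy--Schwarz, while the positive part is controlled by the crude majorant $|P_n|\le m\prod_{k,i}\max(1,|z-X_{i,k}|)$ together with a first-moment law of large numbers for $\lp|X_{i,k}|$. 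Your subsequence discussion for the latter can be streamlined: since the summands are nonnegative and $n_k\le n$, one has $\frac{1}{n}\sum_{i=1}^{n_k}\lp|X_{i,k}|\le\frac{1}{n}\sum_{i=1}^{n}\lp|X_{i,k}|\to\int\lp|w|\,d\mu_k(w)$ almost surely, which immediately gives the desired boundedness in probability without case analysis. Two practical differences are worth noting: your proof is logically independent of Lemma~\ref{lem:point}, whereas the paper's is not; on the other hand, the Poisson--Jensen approach does not use that the leading coefficient of $P_n$ is bounded below, and so would persist in settings where your factorization step is unavailable.
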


\begin{lemma} \label{lem:lpbound}
Let $U(z) := \max_{1 \leq k \leq m} {c_k}U_{\mu_k}(z)$.  Then for each $r > 0$, we have 
\[ \int_{B(r)} U^2(z) \d\lambda(z) < \infty. \]
\end{lemma}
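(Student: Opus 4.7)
My plan is to reduce the maximum to a sum and then control each logarithmic potential separately by splitting $\log = \lp - \lm$. Since $c_k \in [0,1]$ and there are only finitely many $k$, we have the pointwise bound
\[
U(z)^2 \;\leq\; \Bigl(\sum_{k=1}^{m}\bigl|c_k U_{\mu_k}(z)\bigr|\Bigr)^{2} \;\leq\; m\sum_{k=1}^{m} U_{\mu_k}(z)^{2},
\]
using the convention that the summand is $0$ when $c_k=0$. Thus it suffices to prove the single-measure statement $\int_{B(r)} U_\mu^2\, d\lambda<\infty$ for each $\mu\in\mathcal{P}_{\lp}(\mathbb{C})$, and then sum over $k$.

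For a fixed $\mu \in \mathcal{P}_{\lp}(\mathbb{C})$, write $U_\mu = U_\mu^+ - U_\mu^-$, where
\[
U_\mu^+(z) := \int_{\mathbb{C}} \lp|z-w|\, d\mu(w), \qquad U_\mu^-(z) := \int_{\mathbb{C}} \lm|z-w|\, d\mu(w),
\]
so that $U_\mu^2 \leq 2(U_\mu^+)^2 + 2(U_\mu^-)^2$. For the positive part, I would use the elementary bound $\lp|z-w|\leq \log(1+|z|+|w|)\leq \log(1+r)+\log(1+|w|)$ valid for $z\in B(r)$, together with $\log(1+|w|)\leq \log 2 + \lp|w|$. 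Integrating in $w$ against $\mu$ and using $\mu\in\mathcal{P}_{\lp}(\mathbb{C})$ gives a finite constant $C_{r,\mu}$ such that $U_\mu^+(z)\leq C_{r,\mu}$ for every $z\in B(r)$. Hence $\int_{B(r)} (U_\mu^+)^2\, d\lambda \leq C_{r,\mu}^{2}\, \lambda(B(r)) < \infty$.

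For the negative part, I would apply Jensen's inequality (valid because $\mu$ is a probability measure) to obtain
\[
U_\mu^-(z)^{2} \;\leq\; \int_{\mathbb{C}} (\lm|z-w|)^{2}\, d\mu(w),
\]
then integrate over $B(r)$ and swap the order of integration via Tonelli. Since $\lm|z-w|$ vanishes outside $|z-w|\leq 1$, translation invariance of Lebesgue measure gives, for every $w\in\mathbb{C}$,
\[
\int_{B(r)} (\lm|z-w|)^{2}\, d\lambda(z) \;\leq\; \int_{|u|\leq 1} (\log|u|)^{2}\, d\lambda(u) \;=\; 2\pi \int_{0}^{1} (\log s)^{2}\, s\, ds,
\]
which is a universal finite constant. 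Integrating this bound against $d\mu$ and combining with the positive-part estimate yields $\int_{B(r)} U_\mu^{2}\, d\lambda<\infty$, and the lemma follows.

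There isn't really a serious obstacle here; the only mildly delicate point is the use of Jensen's inequality on $U_\mu^-$, which works precisely because $\mu$ is a probability measure and $t\mapsto t^2$ is convex. Everything else is a direct application of Tonelli's theorem and the defining integrability condition of $\mathcal{P}_{\lp}(\mathbb{C})$.
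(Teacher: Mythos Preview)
Your proposal is correct and follows essentially the same approach as the paper: reduce to a single $U_{\mu_k}$, split into $\lp$ and $\lm$ parts, bound the positive part uniformly on $B(r)$ via $\mu_k\in\mathcal{P}_{\lp}(\mathbb{C})$, and handle the negative part by Fubini/Tonelli together with the polar-coordinate computation $\int_{|u|\leq 1}(\log|u|)^{2}\,d\lambda(u)<\infty$. The only cosmetic difference is that you invoke Jensen's inequality on $(U_\mu^-)^2$, whereas the paper expands the square as a double integral in $\mu\otimes\mu$ and uses $ab\leq a^2+b^2$; both routes arrive at the same inner integral and the same bound.
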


With these results in hand, we can now complete the proof of Theorem \ref{thm:main}.
\begin{proof}[Proof of Theorem \ref{thm:main}]
Let $\varphi: \mathbb{C} \to \mathbb{C}$ be a smooth and compactly supported function.  Let $r > 0$ be sufficiently large so that the support of $\varphi$ is contained in $B(r)$.  Define
\[ f_n(z) := \Delta \varphi(z) \left( \frac{1}{n} \log \left| \sum_{k=1}^m p_{n,k}(z) \right| - \max_{1 \leq k \leq m} {c_k}U_{\mu_k}(z) \right). \]

Our goal is to apply Lemma \ref{lemma:dominated} to $f_n$ on the finite measure space consisting of the ball $B(r)$ with the Lebesgue measure.  We start by verifying the conditions of Lemma \ref{lemma:dominated}.  The pointwise convergence assumption follows from Lemma \ref{lem:point}.  In order to establish uniform integrability, we bound
\begin{align*}
	\int_{B(r)} |f_n(z)|^2 \d \lambda
	&\ll \| \Delta \varphi \|^2_{\infty} \left[ \int_{B(r)} \left( \frac{1}{n} \log \left| \sum_{k=1}^m p_{n,k}(z) \right|\right)^2 \d\lambda(z)  + \int_{B(r)} U^2(z) \d \lambda(z) \right], 
\end{align*}
where $\| \Delta \varphi\|_{\infty}$ is the $L^\infty$-norm of $\Delta \varphi$ and $U(z) := \max_{1 \leq k \leq m} {c_k}U_{\mu_k}(z)$.  This bound, together with Lemmas \ref{lem:tightness} and \ref{lem:lpbound}, establishes the uniform integrability assumption (with $\delta = 1$) in Lemma \ref{lemma:dominated}.  Applying Lemma \ref{lemma:dominated}, we conclude that
\[ \int_{\mathbb{C}} f_n(z) \d \lambda(z) = \int_{B(r)} f_n(z) \d \lambda(z) \longrightarrow 0 \]
in probability as $n \to \infty$, which establishes \eqref{eq:convtoshow}.  

Recall that 
\[ \rho_n := \frac{1}{n} \sum_{i=1}^n \delta_{z_i^{(n)}} \]
is the empirical measure constructed from the roots $z_1^{(n)}, \ldots, z_n^{(n)}$ of $\sum_{k=1}^m p_{n,k}$.  In view of \eqref{eq:HKYV}, we have shown that, for any $\varphi \in C_c^\infty(\mathbb{C})$, 
\begin{equation} \label{eq:convrhon}
	\frac{1}{n} \sum_{i=1}^n \varphi(z_i^{(n)}) = \int_{\mathbb{C}} \varphi \d \rho_n \longrightarrow \frac{1}{2 \pi} \int_\mathbb{C} \Delta \varphi(z)\left( \max_{1 \leq k \leq m} {c_k}U_{\mu_k}(z) \right)\d\lambda(z) 
\end{equation} 
in probability as $n \to \infty$.  

We now show the existence of a (deterministic) probability measure $\rho$ on $\mathbb{C}$ such that \eqref{eq:main} holds.  The proof given below for the existence of $\rho$ is based on standard results for subharmonic functions, and we refer the reader to the classic texts \cite{MR0460672, MR1049148} on the topic.   

Recall that $U_{\mu_1}, \ldots, U_{\mu_m}$ are subharmonic functions on $\mathbb{C}$.  Since $\log |\cdot|$ is locally Lebesgue integrable on $\mathbb{C}$, it follows from Fubini's theorem that $U_{\mu_1}, \ldots, U_{\mu_m}$ are finite almost everywhere (see also Lemma \ref{lemma:F_mu} below).  Without loss of generality, we will assume $U_{\mu_1}(0)$ is finite.  This is not a restriction since if $U_{\mu_1}(0) = -\infty$, we can find some $a \in \mathbb{C}$ where $U_{\mu_1}(a)$ is finite and repeat the proof above with the random variables $X_{i,k}$ replaced by $X_{i,k} - a$.  Indeed, shifting the roots of the polynomials $p_{n,k}$, $1 \leq k \leq m$ by $a$ shifts the logarithmic potentials $U_{\mu_1}, \ldots, U_{\mu_m}$ by $a$ and shifts each of the roots $z_1^{(n)}, \ldots, z_n^{(n)}$ of the sum $\sum_{k=1}^m p_{n,k}$ by $a$ as well.  

By Theorems 2.2.3 in \cite{Ransford}, $U(z) := \max_{1 \leq k \leq m} {c_k}U_{\mu_k}(z)$ is a subharmonic function on $\mathbb{C}$. 
Thus, from the results in Section 3.5.4 of \cite{MR0460672} (see also Section 3.7 of \cite{Ransford}), it follows that there exists a unique (deterministic) Radon measure $\rho$ on $\mathbb{C}$ (often referred to as the Riesz measure associated to $U$) such that \eqref{eq:main} holds.  Combining \eqref{eq:main} with \eqref{eq:convrhon}, we conclude that there exists a unique (deterministic) Radon measure $\rho$ on $\mathbb{C}$ so that, for all $\varphi \in C_c^\infty(\mathbb{C})$, 
\begin{equation} \label{eq:convsubprob}
	\int_{\mathbb{C}} \varphi \d\rho_n \longrightarrow \int_{\mathbb{C}} \varphi \d \rho 
\end{equation} 
in probability as $n \to \infty$. Since $\rho_n$ is a (random) probability measure for all $n$, it immediately follows that $\rho$ is a sub-probability measure (i.e., $\rho(\mathbb{C}) \leq 1$).  In order to complete the proof of the theorem, it suffices to show that $\rho$ is a probability measure.  Indeed, if $\rho$ is a probability measure, the convergence in \eqref{eq:convsubprob} can be extended to all bounded and continuous $\varphi: \mathbb{C} \to \mathbb{C}$ using standard truncation and approximation arguments, which would complete the proof.  

It remains to verify that $\rho$ is a probability measure.  To this end, for any subharmonic function $f: \mathbb{C} \to [-\infty, \infty)$ and any $r > 0$, define
\[ m(f, r) := \frac{1}{2 \pi} \int_0^{2 \pi} f(r e^{i \theta}) \d \theta. \]
Recall that every upper semicontinuous function on a compact set attains a maximum.  Since $U(z) := \max_{1 \leq k \leq m } c_k U_{\mu_k}(z)$ is subharmonic, it follows that
\begin{equation} \label{eq:bndU0}
	U_{\mu_1}(0) \leq U(0) \leq m(U, r) \leq \max_{|z| = r} U(z) < \infty 
\end{equation} 
for any $r > 0$, and hence $m(U, r)$ is finite for every $r > 0$ (where we used the assumption that $U_{\mu_1}(0)$ is finite).  Similarly, 
\begin{equation} \label{eq:bndUmu1}
	U_{\mu_1}(0) \leq m(U_{\mu_1}, r) \leq \max_{|z| = r} U_{\mu_1}(z) < \infty, 
\end{equation} 
and so $m(U_{\mu_1}, r)$ is finite for all $r > 0$.  In addition, since $U_{\mu_1}(z) \leq U(z)$ for all $z \in \mathbb{C}$, it follows that
\begin{equation} \label{eq:mUUbnd}
	m(U_{\mu_1}, r) \leq m(U, r) \quad \text{ for all } r > 0.  
\end{equation}

For $R > 1$, define
\[ \psi_R(z) := \begin{cases}
			\log R, & |z| \leq 1 \\
			\log \left( \frac{R}{|z|} \right), &1 < |z| < R \\
			0, & |z| \geq R.
		      \end{cases} \]
It follows from Eq. (3.5.7) in \cite{MR0460672} (see also Lemma 2.12 in \cite{MR2339369}) that
\begin{equation} \label{eq:rhoC}
	\int_{\mathbb{C}} \psi_R(z) \d \rho(z) = m( U, R) - m(U, 1) 
\end{equation} 
and 
\begin{equation} \label{eq:mu_1C}
	\int_{\mathbb{C}} \psi_R(z) \d \mu_1(z) = m(U_{\mu_1}, R) - m(U_{\mu_1}, 1). 
\end{equation} 
Since $\frac{1}{\log R} \psi_R \nearrow 1$ as $R \to \infty$, the monotone convergence theorem together with \eqref{eq:rhoC} and \eqref{eq:mu_1C} imply that
\begin{equation} \label{eq:rholim}
	\rho(\mathbb{C}) = \lim_{R \to \infty} \frac{ m(U, R) - m(U, 1) }{ \log R} = \lim_{R \to \infty} \frac{ m(U, R) }{ \log R} 
\end{equation}
and 
\begin{equation} \label{eq:mu_1lim}
	1 = \mu_1(\mathbb{C}) = \lim_{R \to \infty} \frac{ m(U_{\mu_1}, R) - m(U_{\mu_1}, 1) }{\log R} = \lim_{R \to \infty} \frac{m(U_{\mu_1}, R)}{\log R} 
\end{equation} 
since $m(U, 1)$ and $m(U_{\mu_1}, 1)$ are both finite and independent of $R$ (see \eqref{eq:bndU0} and \eqref{eq:bndUmu1} with $r = 1$).  Therefore, we conclude from \eqref{eq:mUUbnd}, \eqref{eq:rholim}, and \eqref{eq:mu_1lim} that
\[ \rho(\mathbb{C}) = \lim_{R \to \infty} \frac{ m(U, R)}{ \log R} \geq \lim_{R \to \infty} \frac{ m( U_{\mu_1}, R) }{ \log R} = 1. \]
Since we already showed $\rho$ is a sub-probability measure, we conclude that $\rho(\mathbb{C}) = 1$, and the proof is complete.  
\end{proof}

We prove Lemma \ref{lem:point} in Section \ref{sec:point}.  The proof of Lemma \ref{lem:tightness} is given in Section \ref{sec:tightness}.  We conclude this section with the proof of Lemma \ref{lem:lpbound}.

\subsection{Proof of Lemma \ref{lem:lpbound}} \label{sec:lpbound}
We will need the following result for the proof of Lemma \ref{lem:lpbound}.
\begin{proposition} \label{prop:logplusbound}
Let $\mu \in \mathcal{P}_{\lp}(\mathbb{C})$.  Then for any $r > 0$, there exists a constant $C_r > 0$ (depending on $\mu$ and $r$) so that
\[ \sup_{|z| < r} \int_{\mathbb{C}} \lp |z - w| \d\mu(w) \leq C_r. \]
\end{proposition}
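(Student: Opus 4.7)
The plan is to reduce the supremum over $z$ with $|z| < r$ to a single integral that does not depend on $z$, using the elementary subadditivity of $\log_+$ together with the triangle inequality.

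First I would establish the pointwise bound
\[
\log_+ |z - w| \le \log_+(|z| + |w|) \le \log 2 + \log_+|z| + \log_+|w|,
\]
valid for all $z, w \in \mathbb{C}$. The first inequality is the triangle inequality combined with monotonicity of $\log_+$. The second is a short case check: if $|z|+|w| \le 1$ the left side is $0$; otherwise $|z|+|w| \le 2 \max(|z|,|w|)$, and taking $\log$ gives $\log 2 + \log \max(|z|,|w|) \le \log 2 + \log_+|z| + \log_+|w|$, since whichever of $|z|,|w|$ achieves the maximum must then exceed $1/2$, and contributions from factors below $1$ only help the bound.

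Next, since $\log_+$ is non-decreasing on $[0,\infty)$, the condition $|z| < r$ gives $\log_+|z| \le \log_+ r$. Substituting and integrating against $\mu$ yields
\[
\int_{\mathbb{C}} \log_+|z - w|\, d\mu(w) \le \log 2 + \log_+ r + \int_{\mathbb{C}} \log_+|w|\, d\mu(w).
\]
The final integral is finite precisely because $\mu \in \mathcal{P}_{\log_+}(\mathbb{C})$, so defining
\[
C_r := \log 2 + \log_+ r + \int_{\mathbb{C}} \log_+|w|\, d\mu(w)
\]
gives a finite constant depending only on $\mu$ and $r$, and the bound is uniform in $z$ with $|z| < r$.

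There is no real obstacle here; the only thing to be careful about is the pointwise subadditivity of $\log_+$, which fails without the additive $\log 2$ correction (consider $a = b = 1$, where $\log_+(a+b) = \log 2 > 0 = \log_+ a + \log_+ b$). Once that is recorded, everything else is an immediate application of monotonicity and the defining hypothesis on $\mu$.
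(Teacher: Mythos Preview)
Your proof is correct and follows essentially the same approach as the paper: apply the triangle inequality and monotonicity to bound $\log_+|z-w|$ by $\log_+(|z|+|w|)$ (the paper goes directly to $\log_+(r+|w|)$), then invoke a subadditivity-type estimate for $\log_+$ of a sum to separate out a constant plus $\int \log_+|w|\,d\mu(w)$. The only difference is cosmetic---you use the clean inequality $\log_+(a+b)\le \log 2+\log_+a+\log_+b$, while the paper splits on $|w|\lessgtr 1$---but the resulting constants and the underlying idea are the same.
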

\begin{proof}
Note that for $|z| < r$, we have $\lp |z-w| \leq \lp(r + |w|)$.  Thus, we obtain
\begin{align*}
	\int_{\mathbb{C}} \lp |z - w| \d\mu(w) &\leq \int_{\mathbb{C}} \lp (r + |w|) \d \mu(w) \\
	&\leq \lp(r + 1) + \int_{|w| \geq 1} \lp \left( |w| \left( \frac{r}{|w|} + 1 \right) \right) \d \mu(w) \\
	&\leq \lp(r + 1) + \int_{\mathbb{C}} \lp |w| \d\mu(w) + \int_{|w| \geq 1} \lp \left( \frac{r}{|w|} + 1 \right) \d\mu(w) \\
	&\leq 2 \lp(r+1) + \int_{\mathbb{C}} \lp |w| \d\mu(w).
\end{align*}
The claim now follows from the assumption that $\mu \in \mathcal{P}_{\lp}(\mathbb{C})$.  
\end{proof}

\begin{proof}[Proof of Lemma \ref{lem:lpbound}]
Since 
\[ U^2(z) \ll \sum_{k=1}^m U^2_{\mu_k}(z), \]
it suffices to show the bound for $U_{\mu_k}$.  
We bound 
\begin{align*}
	|U_{\mu_k}(z)| &\leq \int_{\mathbb{C}} \left|\log |z-w| \right| \d\mu_k(w) \\
	&= \int_{\mathbb{C}} \lp |z -w| \d\mu_k(w) + \int_{\mathbb{C}} \lp \frac{1}{|z-w|} \d \mu_{k} \\
	&=: \Xi^+_k(z) + \Xi^-_k(z).
\end{align*} 
It thus suffices to show local square integrability of each term on the right-hand side separately.  By Proposition \ref{prop:logplusbound}, we conclude that $\Xi^+_k$ is uniformly bounded on $B(r)$ and hence locally square integrable.  

For $\Xi^-_k$, we write
\begin{align*}
	\int_{B(r)} \left( \Xi^-_k(z) \right)^2 \d\lambda(z) = \int_{B(r)} \int_{\mathbb{C}} \int_{\mathbb{C}} \lp \frac{1}{|z-w_1|} \lp \frac{1}{|z-w_2|} \d \mu_k(w_1) \d \mu_k(w_2) \d \lambda(z).
\end{align*}
Since 
\[ \lp \frac{1}{|z-w_1|} \lp \frac{1}{|z-w_2|} \leq \lp^2 \frac{1}{|z-w_1|} + \lp^2 \frac{1}{|z-w_2|}, \]
by symmetry and Fubini's theorem we find
\begin{align*}
	\int_{B(r)} \left( \Xi^-_k(z) \right)^2 \d\lambda(z) &\leq 2 \int_{B(r)} \int_{\mathbb{C}} \lp^2 \frac{1}{|z-w|} \d \mu_k(w) \d \lambda(z) \\
	&= 2 \int_{\mathbb{C}} \int_{B(r)}  \lp^2 \frac{1}{|z-w|} \d \lambda(z) \d \mu_k(w) \\
	&\leq 2 \int_{\mathbb{C}} \int_{B(w,1)} \log^2 \frac{1}{|z-w|} \d \lambda(z) \d \mu_k(w). 
\end{align*}
By changing to polar coordinates, we obtain
\[ \int_{B(w,1)} \log^2 \frac{1}{|z-w|} \d \lambda(z) = 2 \pi \int_0^1 s \log^2 (1/s) \d s = \frac{\pi}{2} \]
for all $w \in \mathbb{C}$.  Therefore, we conclude that
\[ \int_{B(r)} \left( \Xi^-_k(z) \right)^2 \d\lambda(z) \leq \pi, \]
and the proof is complete.  
\end{proof}

\section{Proof of Lemma \ref{lem:point}} \label{sec:point}
This section is devoted to the proof of Lemma \ref{lem:point}.  We begin with some auxiliary results we will need for the proof.  This first result shows that the logarithmic potential is finite almost everywhere.  
\begin{lemma} \label{lemma:F_mu}
For any $\mu \in \mathcal{P}_{\lp}(\mathbb{C})$, there exists a Lebesgue measurable set $F_{\mu}$ so that $\lambda(F_\mu) = 0$ and for any $z \in \mathbb{C} \setminus F_{\mu}$
\[ \int_{\mathbb{C}} \left| \log |z -w | \right| \d\mu(w) < \infty. \]
\end{lemma}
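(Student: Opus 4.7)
The plan is to split $|\log|z-w|| = \lp|z-w| + \lm|z-w|$ and control each piece separately, localizing on balls $B(r)$ and then taking a countable union over $r \in \mathbb{N}$.

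First I would handle the positive part. By Proposition \ref{prop:logplusbound}, since $\mu \in \mathcal{P}_{\lp}(\mathbb{C})$, there is a finite constant $C_r$ such that $\int_{\mathbb{C}} \lp|z-w|\,d\mu(w) \leq C_r$ for every $z \in B(r)$. In particular, the positive part is finite pointwise on all of $\mathbb{C}$ with no exceptional set required; only the negative part can cause trouble.

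For the negative part, the strategy is a Fubini argument identical in spirit to the estimate used to bound $\Xi^-_k$ in the proof of Lemma \ref{lem:lpbound}. Fix $r > 0$ and write
\[
\int_{B(r)} \int_{\mathbb{C}} \lm|z-w|\,d\mu(w)\,d\lambda(z) = \int_{\mathbb{C}} \int_{B(r)} \lm|z-w|\,d\lambda(z)\,d\mu(w).
\]
Since $\lm|z-w|$ is supported on $|z-w|\leq 1$, the inner Lebesgue integral is dominated by $\int_{B(w,1)} \log(1/|z-w|)\,d\lambda(z) = 2\pi\int_0^1 s\log(1/s)\,ds$, which is a finite constant independent of $w$. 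Integrating against the probability measure $\mu$ then gives a finite bound on the double integral. By Fubini, this forces the inner integral $\int_{\mathbb{C}} \lm|z-w|\,d\mu(w)$ to be finite for $\lambda$-a.e.\ $z \in B(r)$; call the exceptional Lebesgue-null set $F_\mu^{(r)}$.

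Finally, I would set $F_\mu := \bigcup_{r \in \mathbb{N}} F_\mu^{(r)}$, which is a Lebesgue-null Borel set (hence measurable). For any $z \notin F_\mu$, choose $r \in \mathbb{N}$ with $|z| < r$; then both $\int \lp|z-w|\,d\mu(w)$ and $\int \lm|z-w|\,d\mu(w)$ are finite, so $\int_{\mathbb{C}} |\log|z-w||\,d\mu(w) < \infty$. There is no genuine obstacle here: the only delicate point is that $\mu$ need not be compactly supported, but the $\lp$-moment assumption is precisely what is needed to invoke Proposition \ref{prop:logplusbound} for the far-field contribution, while the local logarithmic singularity is always Lebesgue-integrable.
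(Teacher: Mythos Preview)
Your proof is correct and follows essentially the same approach as the paper: split $|\log|z-w||$ into $\lp$ and $\lm$, invoke Proposition~\ref{prop:logplusbound} for the former, and use a Fubini/Tonelli argument on the locally integrable logarithmic singularity for the latter. The paper avoids your localize-and-union step by working globally, defining $F_\mu := \{z : \int_{\mathbb{C}} 1/|z-w|\,d\mu(w) = \infty\}$ and bounding $\lm|z-w| \leq 1/|z-w|$; this is a minor streamlining (your sets $F_\mu^{(r)}$ are in fact nested, so the union is unnecessary), not a different idea.
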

\begin{proof}
Define 
\[ F_\mu = \left\{z \in \C: \int_{\mathbb{C}} \frac{1}{ |z -w| } \d\mu(w) = \infty  \right\}. \]
Observe that, for any $z \in \mathbb{C}$, 
\begin{align*} 
	\int_{\mathbb{C}} \frac{1}{ |z -w| } \d\mu(w) &= \int_{B(z,1)} \frac{1}{ |z -w| } \d\mu(w) + \int_{|w-z| \geq 1} \frac{1}{ |z -w| } \d\mu(w) \\
	&\leq \int_{B(z,1)} \frac{1}{ |z -w| } \d\mu(w) + 1.
\end{align*}
We now check that $\int_{B(z,1)} \frac{1}{ |z -w| } \d\mu(w)$ is finite for almost every $z \in \mathbb{C}$.  Indeed, by Fubini's theorem, we see that
\[ \int_{\C} \int_{B(z,1)} \frac{1}{ |z -w| } \d\mu(w)\d\lambda(z) = \int_{\C} \int_{B(w,1)} \frac{1}{ |z -w| } \d\lambda(z)\d\mu(w) = 2\pi, \]
and hence $\lambda(F_\mu) = 0$.

Now for $z \in \mathbb{C} \setminus F_{\mu}$, we write 
\begin{align*}
	\int_{\mathbb{C}} \left| \log |z -w | \right| \d\mu(w) &= \int_{\mathbb{C}}  \lp |z -w| \d\mu(w) + \int_{\mathbb{C}}  \lm |z -w| \d\mu(w).
\end{align*}
The first term on the right-hand side is finite by Proposition \ref{prop:logplusbound}.  The second term on the right-hand side is finite since $\lm |z -w| \leq \frac{1}{|z-w|}$ and $z \not\in F_{\mu}$.  
\end{proof}

\begin{remark}
As can be seen from the proof of Lemma \ref{lemma:F_mu}, the set $F_{\mu}$ contains the atoms of $\mu$.  However, $F_{\mu}$ may contain other points; for instance, when $\mu$ has rotationally invariant density 
\[ f(w) = \begin{cases} 
		\frac{1}{2 \pi |w|}, &\text{ if } 0 < |w| < 1, \\
		0, & \text{ otherwise}, 
	     \end{cases}	\]
it follows that $0 \in  F_{\mu}$.  
\end{remark}

We also require an anti-concentration bound for sums of iid random variables.  Similar anti-concentration inequalities have previously been used to study the zeros of random polynomials \cite{KZ,MR3283656,ORourke}.     

\begin{theorem}[Theorem 2.22 on p. 76 of \cite{Petrov}] \label{thm:1D}
Let $Y_1,Y_2,\ldots$ be i.i.d. copies of a non-degenerate real-valued random variable.  Then, for each fixed $t > 0$, we have 
\[ \lim_{n \to \infty} \sup_{u \in \mathbb{R}} \P\left(\left|\sum_{j = 1}^n Y_j - u\right| \leq t \right) = 0. \]
\end{theorem}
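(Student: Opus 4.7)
The plan is to reduce the desired statement to a standard estimate on integrals of characteristic functions via Esseen's concentration inequality. Write $Q(X;t) := \sup_{u \in \mathbb{R}} \P(|X-u| \leq t)$ for the Lévy concentration function and let $\phi$ denote the characteristic function of $Y_1$. By independence, the characteristic function of $S_n := \sum_{j=1}^n Y_j$ is $\phi^n$. Esseen's inequality, a standard Fourier-analytic bound, provides an absolute constant $C > 0$ such that
\[ Q(S_n; t) \;\leq\; C\, t \int_{-1/t}^{1/t} |\phi(\xi)|^n \, d\xi \]
for every $t > 0$ and every $n \geq 1$, so it is enough to show that the integral on the right-hand side tends to zero as $n \to \infty$.

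The key structural input is that the set $E := \{\xi \in \mathbb{R} : |\phi(\xi)| = 1\}$ has Lebesgue measure zero whenever $Y_1$ is non-degenerate. Indeed, $|\phi(\xi)| = 1$ holds iff $\xi Y_1$ is almost surely concentrated on a single coset of $2\pi\mathbb{Z}$; if $\xi_1, \xi_2 \in E$ with corresponding phases $c_1, c_2$, then $(\xi_1+\xi_2) Y_1 \in (c_1+c_2) + 2\pi \mathbb{Z}$ almost surely, so $\xi_1 + \xi_2 \in E$, and similarly $-\xi_1 \in E$. Since $|\phi|$ is continuous, $E$ is therefore a closed subgroup of $\mathbb{R}$. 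The only closed subgroups of $\mathbb{R}$ are $\{0\}$, $\alpha \mathbb{Z}$ for some $\alpha > 0$, and $\mathbb{R}$ itself; the last possibility would force $Y_1$ to be almost surely constant, which is excluded by non-degeneracy. Hence $E$ is at most countable, and in particular $|\phi(\xi)|^n \to 0$ for almost every $\xi \in [-1/t, 1/t]$.

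Since $|\phi(\xi)|^n \leq 1$ and the interval $[-1/t, 1/t]$ has finite Lebesgue measure, the dominated convergence theorem yields $\int_{-1/t}^{1/t} |\phi(\xi)|^n \, d\xi \to 0$ as $n \to \infty$, which completes the argument. The only nontrivial ingredient is Esseen's inequality itself; beyond that, the proof is essentially bookkeeping combined with the classification of closed subgroups of $\mathbb{R}$. A more quantitative alternative would be to invoke the Kolmogorov--Rogozin inequality, which yields the sharper bound $Q(S_n; t) = O(n^{-1/2})$, but the qualitative statement needed here is already captured by the dominated convergence route.
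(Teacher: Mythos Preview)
Your proof is correct. The paper does not actually prove this statement: it is quoted verbatim as Theorem 2.22 from Petrov's monograph and invoked as a black box, so there is no ``paper's own proof'' to compare against. Your argument via Esseen's concentration inequality, the closed-subgroup structure of $\{\xi : |\phi(\xi)| = 1\}$, and dominated convergence is a standard and clean route to the result; it supplies exactly the self-contained justification the paper defers to the reference. The one step you leave implicit---that $|\phi(\xi)| = 1$ for all $\xi$ forces $Y_1$ to be almost surely constant---is itself well known (e.g., consider the symmetrization $Y_1 - Y_1'$, whose characteristic function $|\phi|^2 \equiv 1$ forces $Y_1 = Y_1'$ a.s.), so no gap arises.
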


We will need the following result in order to apply Theorem \ref{thm:1D}.

\begin{lemma}\label{lem:non-constant}
Let $\mu$ be a non-degenerate probability measure on $\C$, and let $X$ be a random variable with distribution $\mu$.  Then there is a Lebesgue measurable set $E_\mu \subset \C$ with $\lambda(E_\mu) = 0$ so that, for any $z \in \C \setminus E_\mu$, $\log|z - X|$ is a non-degenerate random variable.
\end{lemma}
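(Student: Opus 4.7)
The plan is to reduce the question to elementary plane geometry. For a real-valued random variable $Y$, being degenerate means $\P(Y = c) = 1$ for some constant $c$. Applied to $Y = \log|z - X|$, this translates to the existence of $r \geq 0$ with $|z - X| = r$ almost surely, i.e.\ $\supp\mu \subset \{w \in \C : |w - z| = r\}$. The case $r = 0$ would force $\mu = \delta_z$, which is excluded by the non-degeneracy of $\mu$. So I would define
\[ E_\mu := \{z \in \C : \supp\mu \subset \partial B(z, r_z) \text{ for some } r_z > 0\} \]
and reduce the lemma to showing that $E_\mu$ is Lebesgue measurable with $\lambda(E_\mu) = 0$.

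The key geometric observation is that two distinct circles in $\C$ meet in at most two points. Thus if $z_1, z_2 \in E_\mu$ are distinct, $\supp\mu$ is contained in the intersection of two distinct circles, forcing $|\supp\mu| \leq 2$. Taking the contrapositive, I would split into three cases according to the size of $\supp\mu$. If $|\supp\mu| \geq 3$, then $E_\mu$ contains at most one point. If $\supp\mu = \{a,b\}$ with $a \neq b$, then $z \in E_\mu$ iff $|z-a| = |z-b|$, so $E_\mu$ is exactly the perpendicular bisector of $\{a,b\}$, a line in $\R^2$. The remaining possibility $|\supp\mu| \leq 1$ is ruled out by the hypothesis that $\mu$ is non-degenerate.

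In each case, $E_\mu$ is either empty, a single point, or a line; all of these are closed subsets of $\C = \R^2$ of two-dimensional Lebesgue measure zero, so $E_\mu$ is measurable and $\lambda(E_\mu) = 0$, completing the proof.

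I do not anticipate a real obstacle here; the argument is elementary. The only care required is in translating the distributional statement ``$\log|z - X|$ is degenerate'' into the geometric condition on $\supp\mu$ and in treating the borderline case where $\supp\mu$ is a two-point set (so that $E_\mu$ genuinely sweeps out a line rather than a single point).
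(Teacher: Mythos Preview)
Your argument is correct and matches the paper's proof essentially line for line: the paper also reduces to the geometric statement that $\supp(\mu)$ lies on a circle centered at $z$, then splits into the cases $|\supp(\mu)|=2$ (where $E_\mu$ is the perpendicular bisector) and $|\supp(\mu)|\geq 3$ (where $E_\mu$ is at most a single point). Your write-up is in fact slightly more careful than the paper's in justifying why the three-or-more-points case yields at most one center.
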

\begin{proof}
Suppose there exists $z_0 \in \mathbb{C}$ so that $\log |z_0 - X|$ is degenerate.  Thus, it must be the case that $|z_0 - x|$ is constant for all $x \in \supp(\mu)$.  By assumption, $\supp(\mu)$ contains at least $2$ points; if $\supp(\mu)$ is precisely two points, then set $E_\mu$ to be the line that is equidistant from the two points.  If $\supp(\mu)$ contains at least three points, then there is at most $1$ point equidistant from the three points, and we set $E_\mu$ to be this point.
\end{proof}

We now complete the proof of Lemma \ref{lem:point}.  

\begin{proof}[Proof of Lemma \ref{lem:point}]
For any subset $S$ of $[m]$, define the polynomial sum
\[ q_S(z) := \sum_{k \in S} p_{n,k}(z). \]
If $S$ is nonempty, the roots of $q_S$ can have at most countably many atoms.  Let $G$ be the collection of all atoms of the roots of $q_S$ as $S$ ranges over all nonempty subsets of $[m]$ and $n$ ranges over $\mathbb{N}$.  Then $G$ is at most countable and $\lambda(G) = 0$. 

For each $1 \leq k \leq m$, let $F_{\mu_k}$ and $E_{\mu_k}$ be the sets from Lemmas \ref{lemma:F_mu} and \ref{lem:non-constant}, respectively.  Set
\[ F := G \cup \bigcup_{k=1}^m \left( F_{\mu_k} \cup E_{\mu_k} \right). \]
It follows that $\lambda(F) = 0$.  

Fix $z \in \mathbb{C} \setminus F$.  Our goal is to show that
\begin{equation} \label{eq:point_show}
	\frac{1}{n} \log \left| \sum_{k=1}^m p_{n,k}(z) \right| \longrightarrow \max_{1 \leq k \leq m} c_k U_{\mu_k}(z) 
\end{equation} 
in probability as $n \to \infty$.  In order to do so, we will show that every subsequence of $\{n\}_{n \geq 1}$ has a further subsequence so that the convergence in \eqref{eq:point_show} holds along this further subsequence.  

Recall that $\{n_1\}_{n \geq 1}, \{n_2\}_{n \geq 1}, \ldots, \{n_m\}_{n \geq 1}$ are sequences indexed by $n$.  Consider an arbitrary subsequence of $\{n\}_{n \geq 1}$, which, for simplicity, we will denote as $\{n\}_{n \geq 1}$.  We now consider a further subsequence, again denoted by $\{n\}_{n \geq 1}$, so that, along this further subsequence, each $\{n_k\}_{n \geq 1}$ is either bounded or $\lim n_k = \infty$.  Let 
\[ S := \left\{ 1 \leq k \leq m : \lim n_k = \infty \right\}, \]
and let $S^c$ be the complement of $S$ in $[m]$.  Stated another way, $S^c$ contains the indices which correspond to bounded degree sequences.  We can then decompose our sum as
\begin{equation} \label{eq:sumdecomp}
	\sum_{k=1}^m p_{n,k}(z) = \sum_{k \in S} p_{n,k}(z) + q_{S^c}(z), 
\end{equation} 
where $q_{S^c}$ contains the ``low-degree'' polynomials; in particular, the degree of $q_{S^c}$ is bounded.  

The proof of the lemma is divided into four main steps. 
\begin{enumerate}
\item[\textbf{Step 1}]We first claim that for each $1 > \eps > 0$ and any $k,l \in S$ with $k \neq l$, 
\begin{equation}\label{eq:epsbound} \P\left( \eps < \frac{|p_{n,k}(z)|}{ |p_{n,l}(z)|} < \eps^{-1} \right) = o(1). 
\end{equation}

To establish \eqref{eq:epsbound}, we first note that, since $z \in \mathbb{C}\setminus F$, $z$ avoids the collection of all atoms of the roots of $p_{n,k}$ and $p_{n,l}$.  Thus, the ratio $p_{n,k}(z)/p_{n,l}(z)$ on the left-hand side of \eqref{eq:epsbound} is well-defined and nonzero, except on events which hold with probability zero, which we safely ignore for the remainder of the proof.  By conditioning on $p_{n,l}$, we have
\begin{align*}
	\P\left( \eps < \frac{|p_{n,k}(z)|}{ |p_{n,l}(z)|} < \eps^{-1} \right) &= \mathbb{P}\left(\left| \log|p_{n,k}(z)|-\log|p_{n,l}(z)| \right| \leq \log \epsilon^{-1} \right) \\
	&\leq \sup_{u \in \mathbb{R}} \P \left(\left| \sum_{i=1}^{n_k} \log|z - X_{i,k}| - u \right| \leq \log \epsilon^{-1} \right).
\end{align*} 
By Lemma \ref{lem:non-constant}, since $z \in \mathbb{C} \setminus F$,  $\log|z - X_{1,k}|$ is a non-degenerate random variable. Therefore, by Theorem \ref{thm:1D} and since $n_k \to \infty$ as $n \to \infty$, we have that 
\[ \lim_{n \to \infty} \sup_{u \in \mathbb{R}} \P\left(\left|\sum_{i = 1}^{n_k} \log|z - X_{i,k}| - u\right| \leq \log\epsilon^{-1} \right) = 0, \]
which establishes \eqref{eq:epsbound}.  

\item[\textbf{Step 2}] We claim that if $S^c$ is non-empty, then for each $1 > \eps > 0$ and any $k \in S$
\begin{equation} \label{eq:qscbound} \P\left( \eps < \frac{|p_{n,k}(z)|}{ |q_{S^c}(z)|} < \eps^{-1} \right) = o(1). 
\end{equation}

The proof of \eqref{eq:qscbound} is identical to the proof of \eqref{eq:epsbound}, except now we condition on $q_{S^c}$; we omit the details.  

\item[\textbf{Step 3}]We now use the previous two steps to show that
\begin{equation}\label{eq:logsumtologmax} \frac{1}{n} \log \left| \sum_{k=1}^m p_{n,k}(z) \right|  - \frac{1}{n} \log \max \left( \left \{ |p_{n,k}(z)| : k \in S \right\} \cup \{ |q_{S^c}(z)| \} \right) \longrightarrow 0 
\end{equation}
in probability as $n \to \infty$ when $S^c$ is nonempty and
\begin{equation} \label{eq:logsumtologmax2}
	\frac{1}{n} \log \left| \sum_{k=1}^m p_{n,k}(z) \right|  - \frac{1}{n} \log \max \left \{ |p_{n,k}(z)| : k \in [m] \right\} \longrightarrow 0 
\end{equation} 
in probability as $n \to \infty$ when $S^c$ is empty.  

To prove \eqref{eq:logsumtologmax}, assume $S^c$ is non-empty, and define
$$\Omega := \bigcup_{\substack{k,l \in S\\ k \not= l}} \left\{\frac{1}{2m} \leq \left| \frac{p_{n,k}(z)}{p_{n,l}(z)}\right| \leq 2m\right\} \cup \bigcup_{k \in S} \left\{\frac{1}{2m} \leq \left| \frac{p_{n,k}(z)}{q_{S^c}(z)}\right| \leq 2m\right\}. $$
By \eqref{eq:epsbound} and \eqref{eq:qscbound}, it follows that $\P(\Omega) = o(1)$, and hence it suffices to work on the event $\Omega^c$. 
Fix a realization $\omega \in \Omega^c$.  We consider two cases.  If, for some $s \in S$, 
\[ |p_{n,s}(z)| =  \max \left( \left \{ |p_{n,k}(z)| : k \in S \right\} \cup \{ |q_{S^c}(z)| \} \right), \]
then, for the fixed realization $\omega$, 
\begin{equation} \label{eq:pnkratio}
	\left| \frac{p_{n,k}(z)}{p_{n,s}(z)}\right| < \frac{1}{2m}, \quad k \in S, k \not= s
\end{equation} 
and
\begin{equation} \label{eq:qScratio}
	\left| \frac{ q_{S^c}(z) }{ p_{n,s}(z) } \right| < \frac{1}{2m}. 
\end{equation} 
Thus, factoring out $p_{n,s}(z)$, we have that 
\begin{align*}
\frac{1}{n} \log \left| \sum_{k=1}^m p_{n,k}(z) \right| &= \frac{1}{n}\log\left| \sum_{k \in S} p_{n,k}(z) + q_{S^c}(z)\right| \\
&= \frac{1}{n}\log|p_{n,s}(z)| + \frac{1}{n}\log\left|1+\sum_{\substack{k \in S \\ k \neq s}} \frac{p_{n,k}(z)}{p_{n,s}(z)} + \frac{q_{S^c}(z)}{p_{n,s}(z)} \right| \\ 
&=  \frac{1}{n}\log|p_{n,s}(z)| + O\left(\frac{1}{n}\right),
\end{align*}
where we used \eqref{eq:sumdecomp} in the first step and \eqref{eq:pnkratio} and \eqref{eq:qScratio} in the last step.  In the second case, if 
\[ |q_{S^c}(z)| =  \max \left( \left \{ |p_{n,k}(z)| : k \in S \right\} \cup \{ |q_{S^c}(z)| \} \right), \]
then
\begin{equation} \label{eq:pnkratio2}
	\left| \frac{p_{n,k}(z)}{q_{S^c}(z)}\right| < \frac{1}{2m}, \quad k \in S.
\end{equation}
Thus, factoring out $q_{S^c}(z)$, we have
\begin{align*}
\frac{1}{n} \log \left| \sum_{k=1}^m p_{n,k}(z) \right| &= \frac{1}{n}\log\left| \sum_{k \in S} p_{n,k}(z) + q_{S^c}(z)\right| \\
&= \frac{1}{n}\log|q_{S^c}(z)| + \frac{1}{n}\log\left|1+\sum_{k \in S} \frac{p_{n,k}(z)}{q_{S^c}(z)} \right| \\ 
&=  \frac{1}{n}\log|q_{S^c}(z)| + O\left(\frac{1}{n}\right),
\end{align*}
where we used \eqref{eq:pnkratio2} in the last step.  

Therefore, for the fixed realization $\omega$, we conclude that
\[ \frac{1}{n} \log \left| \sum_{k=1}^m p_{n,k}(z) \right| = \frac{1}{n} \log \max \left( \left \{ |p_{n,k}(z)| : k \in S \right\} \cup \{ |q_{S^c}(z)| \} \right) + O \left( \frac{1}{n} \right), \]
and the convergence in \eqref{eq:logsumtologmax} holds on $\Omega^c$.  

Lastly, if $S^c$ is empty, an analogous argument establishes \eqref{eq:logsumtologmax2}; we omit the details.

\item[\textbf{Step 4}]Finally, we use the law of large numbers to show that 
\begin{equation}\label{eq:logmaxtomax} \frac{1}{n} \log \max \left( \left \{ |p_{n,k}(z)| : k \in S \right\} \cup \{ |q_{S^c}(z)| \} \right) \longrightarrow \max_{1 \leq k \leq m} c_k U_{\mu_k}(z)
\end{equation}
in probability as $n \to \infty$ when $S^c$ is nonempty and 
\begin{equation}\label{eq:logmaxtomax2} \frac{1}{n} \log \max \left \{ |p_{n,k}(z)| : k \in [m] \right\} \longrightarrow \max_{1 \leq k \leq m} c_k U_{\mu_k}(z)
\end{equation}
in probability as $n \to \infty$ when $S^c$ is empty.  

Indeed, for $k \in S$, we have that
\[ \frac{1}{n}\log|p_{n,k}(z)| = \frac{1}{n}\sum_{i=1}^{n_k} \log|z-X_{i,k}| = \frac{n_k}{n}\left(\frac{1}{n_k}\sum_{i=1}^{n_k} \log|z-X_{i,k}|\right), \]
and so by the law of large numbers (and the assumptions that $z \in \mathbb{C}\setminus F$ and $\mu_{k} \in \mathcal{P}_{\log_+}(\C)$)
$$\frac{n_k}{n}\left(\frac{1}{n_k}\sum_{i=1}^{n_k} \log|z-X_{i,k}|\right) \longrightarrow c_kU_{\mu_k}(z)$$
almost surely as $n \to \infty$. Thus, by continuity of the pointwise maximum, 
\begin{equation} \label{eq:maxkS}
	\frac{1}{n}\log \max \left \{ |p_{n,k}(z)| : k \in S \right\} \longrightarrow \max_{k \in S} c_kU_{\mu_k}(z)
\end{equation} 
almost surely as $n \to \infty$.  
If $S^c$ is empty, then $S = [m]$ and \eqref{eq:logmaxtomax2} follows from \eqref{eq:maxkS}.  

Assume $S^c$ is nonempty.  Recall that 
$$q_{S^c} = \sum_{k \in S^c} p_{n,k}(z)$$
and 
$c_k = \lim_{n \to \infty} \frac{n_k}{n} = 0$ for all $k \in S^c$. 
We have that
$$\frac{1}{n}\log|q_{S^c}(z)| = \frac{1}{n}\log \left| \alpha \prod_{i=1}^{n_{j_n}} (z-\xi_i) \right| = \frac{\log \alpha}{n} + \frac{1}{n} \sum_{i=1}^{n_{j_n}} \log|z-\xi_i|,$$
where $n_{j_n}$ is the degree of $q_{S^c}$, $\alpha$ is the leading coefficient of $q_{S^c}$, and $\xi_1, \ldots, \xi_{n_j}$ denote the roots of $q_{S^c}$.  
Since the roots of $q_{S^c}$ avoid $z$ with probability $1$ and $n_{j_n}$ is bounded, it follows that 
$$ \frac{1}{n} \sum_{i=1}^{n_{j_n}} \log|z-\xi_i| \longrightarrow 0 $$
almost surely as $n \to \infty$.  In addition, since $1 \leq \alpha \leq m-1$, we have that
\[ \frac{\log \alpha}{n} \longrightarrow 0 \]
as $n \to \infty$, and hence 
\begin{equation} \label{eq:qScconvfinal}
	\frac{1}{n}\log|q_{S^c}(z)| \longrightarrow 0 
\end{equation} 
almost surely.  Since $c_kU_{\mu_k}(z) = 0$ for any $k \in S^c$, \eqref{eq:logmaxtomax} follows from \eqref{eq:maxkS}, \eqref{eq:qScconvfinal}, and the continuity of the pointwise maximum.  
\end{enumerate}

Combining \eqref{eq:logsumtologmax} and \eqref{eq:logmaxtomax} (alternatively, \eqref{eq:logsumtologmax2} and \eqref{eq:logmaxtomax2} when $S^c$ is empty) completes the proof of the lemma. 
\end{proof}

\section{Proof of Lemma \ref{lem:tightness}} \label{sec:tightness}

We now turn to the proof of Lemma \ref{lem:tightness}.  The proofs in this section are similar to the arguments given by Kabluchko in \cite{MR3283656}.  Recall that $z_1^{(n)}, \ldots, z_n^{(n)}$ are the roots of $\sum_{k=1}^m p_{n,k}$.  Since the random variables $|z_1^{(n)}|, \ldots, |z_n^{(n)}|$ can have at most countably many atoms, it follows that, for all but countably many values of $s \in [0, \infty)$, there are no roots of $\sum_{k=1}^m p_{n,k}$ of modulus $s$ for all $n \geq 1$ with probability $1$.  Throughout this section, when we write $r$ and $R$, we assume these two values satisfy this property.  

The proof of Lemma \ref{lem:tightness} is based on the Poisson--Jensen formula for 
\[ \log \left| \sum_{k=1}^m p_{n,k}(z) \right|, \]
see, for example, Chapter 8 of \cite{MR0181738}.  Let $R > r > 0$, and let $y_{1}^{(n)}, \ldots, y_{\ell_n}^{(n)}$ be the zeros of $\sum_{k=1}^m p_{n,k}$ in the disk $B(R)$ (repeated according to multiplicity).  While $\ell_n$ is a random variable, it will always satisfy the deterministic bound $\ell_n \leq n$.  

For any $z \in B(r)$ which is not a zero of $\sum_{k=1}^m p_{n,k}$, the Poisson--Jensen formula states that 
\begin{equation}\label{eq:L-Jensen}
	\log \left| \sum_{k=1}^m p_{n,k}(z)\right| = I_n(z;R) + \sum_{\ell = 1}^{\ell_n} \log\left| \frac{R(z - y_{\ell}^{(n)})}{R^2 - \overline{y_{\ell}^{(n)}}z} \right|, 
\end{equation}
where 
\begin{equation}\label{eq:Poisson-int}
	I_n(z;R) := \frac{1}{2\pi}\int_0^{2\pi} \log \left| \sum_{k=1}^m p_{n,k}(R e^{i\theta}) \right| P_R(|z|,\theta - \arg z)\d\theta
\end{equation}
and $P_R$ denotes the Poisson kernel 
\begin{equation}\label{eq:Poisson-kernel}
	P_R(t,\theta) := \frac{R^2 - t^2}{R^2 + t^2 - 2R t \cos(\theta)} \qquad t \in [0,R], \theta \in [0,2\pi]. 
\end{equation}

We will need the following observation.  

\begin{lemma}\label{lem:I-circle}
For each $R > 0$, there is a constant $C > 0$ (depending only on $R, m, \mu_1, \ldots, \mu_m$) so that 
\[ \E\left[\frac{1}{n}\int_0^{2\pi} \lp \left| \sum_{k=1}^m p_{n,k} (R e^{i\theta}) \right| \d\theta \right] \leq C. \]
\end{lemma}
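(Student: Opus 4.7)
\textbf{Plan for proof of Lemma \ref{lem:I-circle}.}

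The plan is to peel off the sum and the product successively using two elementary inequalities for $\lp$, then swap integrals and invoke Proposition \ref{prop:logplusbound} (or an analogue of its proof) to finish. First, since $|a_1 + \cdots + a_m| \leq m \max_k |a_k|$ and $\lp$ is subadditive on positive reals with $\lp(xy) \leq \lp x + \lp y$, I would observe that
\[
\lp\left|\sum_{k=1}^m p_{n,k}(Re^{i\theta})\right| \leq \log m + \sum_{k=1}^m \lp \left| p_{n,k}(Re^{i\theta})\right|.
\]
Next, because $p_{n,k}(z) = \prod_{i=1}^{n_k}(z-X_{i,k})$ and $\lp$ of a product of nonnegative reals is at most the sum of the $\lp$'s of the factors (indeed $\max(0,\sum \log y_i) \leq \sum \max(0,\log y_i)$), I get
\[
\lp\left|p_{n,k}(Re^{i\theta})\right| \leq \sum_{i=1}^{n_k} \lp\left|Re^{i\theta}-X_{i,k}\right|.
\]

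Now I would take expectation and use Fubini's theorem to bring the expectation inside the angular integral, which gives
\[
\E \int_0^{2\pi} \lp|Re^{i\theta}-X_{i,k}|\, d\theta = \int_0^{2\pi} \int_{\C} \lp|Re^{i\theta}-w| \, d\mu_k(w) \, d\theta.
\]
Since $|Re^{i\theta}| = R$, I can take any fixed $r > R$ and apply Proposition \ref{prop:logplusbound} to bound the inner integral by a constant $C_{r,k}$ depending only on $R, \mu_k$, uniformly in $\theta$. (Alternatively, one can just use the pointwise bound $\lp|Re^{i\theta}-w| \leq \lp(R+|w|) \leq \lp(R+1) + \lp|w|$ and integrate, using $\mu_k \in \mathcal{P}_{\lp}(\C)$.) This yields a constant $C'_k = C'_k(R,\mu_k)$ with
\[
\E \int_0^{2\pi} \lp|Re^{i\theta}-X_{i,k}|\, d\theta \leq 2\pi\, C'_k.
\]

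Finally, summing over $1 \leq i \leq n_k$ and using the key hypothesis $n_k \leq n$, I obtain $\E \int_0^{2\pi} \lp|p_{n,k}(Re^{i\theta})|\, d\theta \leq 2\pi n\, C'_k$. Summing over $k$ and adding the $2\pi \log m$ contribution gives
\[
\E \int_0^{2\pi} \lp\left|\sum_{k=1}^m p_{n,k}(Re^{i\theta})\right|\, d\theta \leq 2\pi \log m + 2\pi n \sum_{k=1}^m C'_k,
\]
and dividing by $n$ produces a bound of the claimed form with $C = 2\pi \log m + 2\pi \sum_k C'_k$ (for $n \geq 1$). There is no real obstacle here; the only subtlety is ensuring the constants depend only on $R, m$, and the measures, which is guaranteed because $n_k \leq n$ converts the sum-over-roots into the $n$ that cancels the $1/n$ prefactor.
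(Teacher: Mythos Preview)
Your proposal is correct and follows essentially the same approach as the paper: both peel off the sum via $\lp|\sum_k p_{n,k}| \leq \log m + \sum_k \lp|p_{n,k}|$, then apply Fubini's theorem and Proposition~\ref{prop:logplusbound}. You have simply made explicit the intermediate steps (subadditivity of $\lp$ on products and the use of $n_k \leq n$) that the paper leaves implicit in its one-line conclusion.
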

\begin{proof}
Since
\begin{align*}
	\lp \left| \sum_{k=1}^m p_{n,k} (R e^{i\theta}) \right| &\leq \lp \left( \sum_{k=1}^m \left| p_{n,k}(R e^{i\theta}) \right| \right) \\
	&\leq \lp \left( m \max_{1 \leq k \leq m} |p_{n,k}(R e^{i\theta})| \right) \\
	&\leq \sum_{k=1}^m \lp \left |p_{n,k}(R e^{i\theta}) \right| + \log m, 
\end{align*}
the result follows by applying Fubini's theorem and Proposition \ref{prop:logplusbound}.  
\end{proof}

We now obtain an upper bound for $I_n(z;R)$ that holds uniformly for $z \in B(r)$.  

\begin{lemma}\label{lem:I-UB}
For every $0 < r < R$ there is a constant $C > 0$ (depending only on $r, R, m, \mu_1, \ldots, \mu_m$) so that 
\[ \P \left( \frac{1}{n}\sup_{z \in B(r)} I_n(z;R) \geq t \right) \leq \frac{C}{t} \]
for all $t > 0$. 
\end{lemma}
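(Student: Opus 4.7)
The plan is to reduce the supremum bound to an estimate on the spherical integral already controlled by Lemma \ref{lem:I-circle}, and then conclude via Markov's inequality.

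First I would decompose $\log = \log_+ - \log_-$ inside the definition of $I_n(z;R)$. Since the Poisson kernel $P_R(t,\theta)$ is nonnegative and $\log_- \geq 0$, dropping the (nonnegative) negative-part contribution gives the pointwise bound
\[ I_n(z;R) \leq \frac{1}{2\pi}\int_0^{2\pi} \lp\! \left| \sum_{k=1}^m p_{n,k}(Re^{i\theta}) \right| P_R(|z|, \theta - \arg z) \d\theta. \]
This removes the singular behavior coming from near-zeros of $\sum_k p_{n,k}$ on $\partial B(R)$ and leaves only the (typically large) contribution from $\log_+$.

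Next I would uniformize over $z \in B(r)$ by using the standard pointwise bound on the Poisson kernel. Direct computation from \eqref{eq:Poisson-kernel} gives
\[ P_R(t,\theta) \leq \frac{R^2 - t^2}{(R-t)^2} = \frac{R+t}{R-t} \leq \frac{R+r}{R-r} \]
for all $t \in [0,r]$ and $\theta \in [0,2\pi]$, since the denominator is minimized at $\cos\theta = 1$. Plugging this into the previous display, I obtain
\[ \sup_{z \in B(r)} I_n(z;R) \leq \frac{R+r}{R-r} \cdot \frac{1}{2\pi}\int_0^{2\pi} \lp\!\left| \sum_{k=1}^m p_{n,k}(Re^{i\theta}) \right| \d\theta =: M_n, \]
and crucially $M_n$ is a nonnegative random variable that does not depend on $z$.

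Finally I would take expectations. Dividing by $n$ and invoking Lemma \ref{lem:I-circle} (which bounds $\E[\frac{1}{n}\int_0^{2\pi} \lp|\sum_k p_{n,k}(Re^{i\theta})| \d\theta]$ by a constant depending on $R, m, \mu_1, \ldots, \mu_m$) gives $\E[M_n/n] \leq C$ for some $C = C(r, R, m, \mu_1, \ldots, \mu_m)$. Markov's inequality applied to the nonnegative random variable $M_n/n$ then yields
\[ \P\!\left( \frac{1}{n}\sup_{z \in B(r)} I_n(z;R) \geq t \right) \leq \P\!\left( \frac{M_n}{n} \geq t \right) \leq \frac{C}{t}, \]
as desired. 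The proof is essentially mechanical once the $\log_+$/Poisson-kernel split is made; the only subtlety is remembering to discard the negative part before supping over $z$, since $\log_-$ is unbounded near the zeros of $\sum_k p_{n,k}$ on the circle $|z| = R$ and would otherwise prevent uniform control.
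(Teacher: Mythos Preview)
Your proof is correct and follows essentially the same approach as the paper: bound $I_n(z;R)$ above by replacing $\log$ with $\log_+$, use a uniform upper bound on the Poisson kernel over $B(r)$, and then apply Markov's inequality together with Lemma~\ref{lem:I-circle}. The only cosmetic difference is that you compute the explicit Poisson-kernel constant $\frac{R+r}{R-r}$, whereas the paper simply asserts the existence of such a constant $M$.
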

\begin{proof}
By \eqref{eq:Poisson-kernel}, there exists $M > 1$ (depending on $R$ and $r$) so that 
\[ \frac{1}{M} \leq P_R(|z|,\theta) \leq M \qquad \text{for all } z\in B(r), \theta \in [0,2\pi]. \]
Thus, we have
\begin{align*}
	I_n(z;R) &\leq \frac{1}{2\pi}\int_0^{2\pi} \lp \left| \sum_{k=1}^m p_{n,k}(R e^{i\theta}) \right| P_R(|z|,\theta - \arg z)\d\theta \\
	&\leq \frac{M}{2 \pi} \int_0^{2\pi} \lp \left| \sum_{k=1}^m p_{n,k}(R e^{i\theta}) \right| \d\theta,
\end{align*}
and hence it suffices to bound
\[ \P \left( \frac{M}{2 \pi n} \int_0^{2\pi} \lp \left| \sum_{k=1}^m p_{n,k}(R e^{i\theta}) \right| \d\theta \geq t \right). \]
The claim now follows from Markov's inequality and Lemma \ref{lem:I-circle}.  
\end{proof}

We will use the Poisson--Jensen formula again to obtain a lower bound on $I_n(z;R)$.  We start with the case when $z = 0$.  Recall that $F$ is the exceptional set from Lemma \ref{lem:point} that has  Lebesgue measure zero.  We will assume throughout that $0 \not\in F$.  This is not a restriction since if $0 \in F$, we can find some $a \in \mathbb{C} \setminus F$ and prove Theorem \ref{thm:main} with the random variables $X_{i,k}$ replaced by $X_{i,k} - a$.  Indeed, shifting the roots of the polynomials $p_{n,k}$, $1 \leq k \leq m$ by $a$ shifts each of the roots $z_1^{(n)}, \ldots, z_n^{(n)}$ of the sum $\sum_{k=1}^m p_{n,k}$ by $a$ as well.  

\begin{lemma}\label{lem:I-0}
Assume $0 \notin F$ and $R > 0$.  There is a constant $A > 0$ (depending only on $\mu_1, \ldots, \mu_m$ and $c_1, \ldots, c_m$) so that 
\[ \lim_{n \to \infty} \P\left(\frac{1}{n} I_n(0; R) \leq - A \right) = 0. \]
\end{lemma}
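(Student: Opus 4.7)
The plan is to evaluate $I_n(0;R)$ directly via the Poisson--Jensen formula at $z=0$ and show that it is bounded below (up to a $o(n)$ term) by $\log|\sum_{k=1}^m p_{n,k}(0)|$, whose normalized limit is a finite constant by Lemma \ref{lem:point}.

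First I would note that the Poisson kernel satisfies $P_R(0,\theta)=1$, so
\[ I_n(0;R) = \frac{1}{2\pi}\int_0^{2\pi}\log\left|\sum_{k=1}^m p_{n,k}(Re^{i\theta})\right|\d\theta. \]
Applying the Poisson--Jensen formula \eqref{eq:L-Jensen} at $z=0$ (which is legitimate since $0 \notin F \supseteq G$ guarantees that $\sum_{k=1}^m p_{n,k}(0) \neq 0$ almost surely for every $n$), we obtain
\[ \log\left|\sum_{k=1}^m p_{n,k}(0)\right| = I_n(0;R) + \sum_{\ell=1}^{\ell_n}\log\frac{|y_\ell^{(n)}|}{R}. \]
Since each zero $y_\ell^{(n)}$ in $B(R)$ satisfies $|y_\ell^{(n)}| \leq R$, every term in the sum on the right-hand side is nonpositive, yielding the pointwise deterministic inequality
\[ \frac{1}{n} I_n(0;R) \geq \frac{1}{n}\log\left|\sum_{k=1}^m p_{n,k}(0)\right|. \]

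Next I would invoke Lemma \ref{lem:point}: since $0 \notin F$, the quantity $\max_{1\leq k\leq m} c_k U_{\mu_k}(0)$ is a finite deterministic real number (finiteness of each $U_{\mu_k}(0)$ is built into the definition of $F \supseteq F_{\mu_k}$, and the convention handles $c_k=0$), which we call $L$, and
\[ \frac{1}{n}\log\left|\sum_{k=1}^m p_{n,k}(0)\right| \longrightarrow L \]
in probability as $n\to\infty$.

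Choosing $A := |L| + 1$, we then have
\[ \P\left(\tfrac{1}{n} I_n(0;R) \leq -A\right) \leq \P\left(\tfrac{1}{n}\log\left|\sum_{k=1}^m p_{n,k}(0)\right| \leq -A\right) \longrightarrow 0, \]
since $-A < L$ and the sequence on the right converges in probability to $L$. The constant $A$ depends only on the value of $L$, which in turn depends only on $\mu_1,\ldots,\mu_m$ and $c_1,\ldots,c_m$, as required.

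There is essentially no obstacle here once the Poisson--Jensen identity is applied at $z=0$; the key insight is that the Blaschke--product correction term on the right side has a definite sign, which converts a delicate integral lower bound into a trivial consequence of the pointwise convergence already established in Lemma \ref{lem:point}. The only bookkeeping issue is to ensure $0$ is not a zero of $\sum_k p_{n,k}$, which is handled by the hypothesis $0 \notin F$ together with the inclusion $G \subseteq F$ from the construction of $F$ in Lemma \ref{lem:point}.
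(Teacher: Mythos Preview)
Your proof is correct and essentially identical to the paper's own argument: both apply the Poisson--Jensen formula at $z=0$, discard the nonpositive Blaschke correction term to obtain $\frac{1}{n}I_n(0;R)\geq \frac{1}{n}\log|\sum_k p_{n,k}(0)|$, and then invoke Lemma~\ref{lem:point} to choose $A$ with $-A<\max_k c_kU_{\mu_k}(0)$. Your explicit mention of $P_R(0,\theta)=1$ and the justification via $G\subseteq F$ that $0$ is a.s.\ not a zero are minor elaborations, but the approach is the same.
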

\begin{proof}
From the Poisson--Jensen formula \eqref{eq:L-Jensen} with $z = 0$, we have 
\[ \log \left| \sum_{k=1}^m p_{n,k}(0)\right| = I_n(0;R) + \sum_{\ell = 1}^{\ell_n} \log\left| \frac{ y_{\ell}^{(n)} }{R} \right|, \]
where $y_{1}^{(n)}, \ldots, y_{\ell_n}^{(n)}$ are the zeros of $\sum_{k=1}^m p_{n,k}$ in the disk $B(R)$.  Thus, it follows that
\[ \frac{1}{n}  I_n(0;R) \geq \frac{1}{n} \log \left| \sum_{k=1}^m p_{n,k}(0)\right|, \]
and so
\[ \P\left(\frac{1}{n} I_n(0; R) \leq - A \right) \leq \P \left( \frac{1}{n} \log \left| \sum_{k=1}^m p_{n,k}(0)\right| \leq -A \right) \]
for any $A > 0$.  

Since $0 \not\in F$, by Lemma \ref{lem:point}, we obtain
\[ \frac{1}{n} \log \left| \sum_{k=1}^m p_{n,k}(0) \right| \longrightarrow \max_{1 \leq k \leq m} {c_k}U_{\mu_k}(0) \]
in probability as $n \to \infty$.  
Therefore, choosing $A > 0$ so that 
\[ -A < \max_{1 \leq k \leq m} {c_k}U_{\mu_k}(0) \] 
completes the proof.  
\end{proof}

We now extend Lemma \ref{lem:I-0} to a lower bound on $\inf_{z \in B(r)}I(z;R)$.  

\begin{lemma}\label{lem:I-gen}
Assume $0 \not\in F$.  For any $0 < r < R$, there is a constant $B > 0$ (depending only on $r, R, m, \mu_1, \ldots, \mu_m$ and $c_1, \ldots, c_m$) so that for all $K \geq 1$ we have  
\[ \limsup_{n \to \infty} \P\left(\frac{1}{n} \inf_{z \in B(r)}I(z;R) \leq -B K \right) \leq 1/K. \]
\end{lemma}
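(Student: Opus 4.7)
The plan is to exploit the Poisson integral representation \eqref{eq:Poisson-int}, decompose the logarithm into positive and negative parts, and reduce the uniform lower bound on $B(r)$ to two pieces of information about the boundary circle $|z|=R$: control of $I_n(0;R)$ (coming from Lemma \ref{lem:I-0}) and control of the $\lp$-mean on the circle (coming from Lemma \ref{lem:I-circle}).

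First, since $r < R$, the Poisson kernel \eqref{eq:Poisson-kernel} is bounded uniformly from above and below: there exists $M = M(r,R) > 0$ with $1/M \leq P_R(|z|, \theta - \arg z) \leq M$ for all $z \in B(r)$ and $\theta \in [0, 2\pi]$. Writing $\log|\cdot| = \lp|\cdot| - \lm|\cdot|$, dropping the nonnegative $\lp$ contribution and bounding $P_R \leq M$ in the $\lm$ contribution yields, for every $z \in B(r)$,
\[ I_n(z;R) \geq -\frac{M}{2\pi}\int_0^{2\pi} \lm\left|\sum_{k=1}^m p_{n,k}(R e^{i\theta})\right| \d\theta. \]
The right-hand side is independent of $z$, so it also bounds $\inf_{z \in B(r)} I_n(z;R)$ from below. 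The task is thus reduced to an upper bound on the one-dimensional $\lm$-integral on the circle of radius $R$.

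Second, evaluating \eqref{eq:Poisson-int} at $z=0$ (where $P_R(0, \theta) \equiv 1$) gives the identity
\[ I_n(0;R) = \frac{1}{2\pi}\int_0^{2\pi}\lp\left|\sum_{k=1}^m p_{n,k}(R e^{i\theta})\right|\d\theta - \frac{1}{2\pi}\int_0^{2\pi}\lm\left|\sum_{k=1}^m p_{n,k}(R e^{i\theta})\right|\d\theta, \]
so the $\lm$-integral equals the $\lp$-integral minus $I_n(0;R)$. By Lemma \ref{lem:I-0} there is a constant $A > 0$ with $\P(\tfrac{1}{n} I_n(0;R) \leq -A) \to 0$, hence $\leq 1/(2K)$ for $n$ sufficiently large. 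By Lemma \ref{lem:I-circle} and Markov's inequality, there is a constant $C' > 0$ (depending on $R, m, \mu_1,\dots,\mu_m$) so that $\P\!\left(\tfrac{1}{n}\tfrac{1}{2\pi}\int_0^{2\pi}\lp|\sum_k p_{n,k}(Re^{i\theta})|\d\theta \geq C' K\right) \leq 1/(2K)$ for all $n$ and all $K \geq 1$.

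Finally, a union bound says that with probability at least $1 - 1/K$ (eventually in $n$) both favorable events hold, in which case
\[ \frac{1}{n}\cdot\frac{1}{2\pi}\int_0^{2\pi} \lm\left|\sum_{k=1}^m p_{n,k}(R e^{i\theta})\right|\d\theta \leq C' K + A \leq (C' + A) K, \]
where the second inequality uses $K \geq 1$. Combined with the first step, this gives $\tfrac{1}{n}\inf_{z \in B(r)} I_n(z;R) \geq -BK$ with $B := M(C' + A)$, exactly the claimed bound. The essential content has already been established in the previous two lemmata; the only mild obstacle is ensuring the tail bound scales linearly in $K$, which is why we apply Markov with threshold $C'K$ rather than using a fixed threshold.
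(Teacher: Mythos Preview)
Your proof is correct and follows essentially the same approach as the paper: bound the Poisson kernel above and below on $B(r)$, reduce the uniform lower bound to control of $I_n(0;R)$ plus the $\lp$-integral on the circle, and then invoke Lemmas \ref{lem:I-0} and \ref{lem:I-circle} with a union bound. The only cosmetic difference is that you discard the nonnegative $\lp$ contribution entirely, whereas the paper retains the term $\tfrac{1}{M}\lp$ and arrives at the identity $\tfrac{2\pi}{n}I_n(z;R) \geq \tfrac{2\pi M}{n}I_n(0;R) - (M - \tfrac{1}{M})\tfrac{1}{n}\int_0^{2\pi}\lp|\sum_k p_{n,k}(Re^{i\theta})|\,d\theta$ before applying the same two lemmata.
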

\begin{proof}
By \eqref{eq:Poisson-kernel}, there exists $M > 1$ (depending on $R$ and $r$) so that 
\[ \frac{1}{M} \leq P_R(|z|,\theta) \leq M \qquad \text{for all } z\in B(r), \theta \in [0,2\pi]. \]
From \eqref{eq:Poisson-int}, we bound
\begin{align*}
	\frac{2\pi}{n} I_n(z;R) &\geq \frac{1}{n}\int_0^{2\pi} \left( \frac{1}{M} \lp \left| \sum_{k=1}^m p_{n,k}(R e^{i\theta}) \right| - M \lm \left| \sum_{k=1}^m p_{n,k}(R e^{i\theta}) \right|\right)\d\theta \\
	&= \frac{2\pi M}{n} I_n(0;R) - \left(M - \frac{1}{M} \right) \frac{1}{n} \int_0^{2\pi} \lp \left| \sum_{k=1}^m p_{n,k}(R e^{i\theta}) \right|\d\theta.  
\end{align*}
Therefore, we find
\begin{align*}
	\P&\left(\frac{1}{n} \inf_{z \in B(r)}I(z;R) \leq -B K \right) \\
	&\qquad\qquad\leq \P \left( \frac{2\pi M}{n} I_n(0;R) \leq \frac{-BK}{2} \right) \\
	&\qquad\qquad\qquad+ \P \left( \left(M - \frac{1}{M} \right) \frac{1}{n} \int_0^{2\pi} \lp \left| \sum_{k=1}^m p_{n,k}(R e^{i\theta}) \right|\d\theta \geq \frac{BK}{2} \right). 
\end{align*}
By Lemma \ref{lem:I-0}, the first term on the right-hand side converges to zero as $n$ tends to infinity for $B > 0$ sufficiently large and any $K \geq 1$.  Applying Markov's inequality and Lemma \ref{lem:I-circle} to the second term with $B$ sufficiently large completes the proof of the lemma.  
\end{proof}

\begin{lemma} \label{lem:I-tight}
Assume $0 \not\in F$.  For $0 < r < R$, the sequence 
\[ \left\{ \frac{1}{n^2} \int_{B(r)} \left[ I_n(z;R) \right]^2 \d\lambda(z) \right\}_{n \geq 1} \]
is tight.  
\end{lemma}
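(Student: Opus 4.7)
The plan is to reduce the tightness of the integral to tightness of the supremum of $\frac{1}{n}|I_n(\cdot;R)|$ over $B(r)$, which in turn I will control by combining the one-sided estimates from Lemmas \ref{lem:I-UB} and \ref{lem:I-gen}.

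First I would observe the deterministic bound
\[ \frac{1}{n^2} \int_{B(r)} [I_n(z;R)]^2 \d\lambda(z) \leq \lambda(B(r)) \left( \frac{1}{n} \sup_{z \in B(r)} |I_n(z;R)| \right)^2, \]
so it suffices to show that $\frac{1}{n}\sup_{z \in B(r)} |I_n(z;R)|$ is tight; tightness is then preserved under squaring and multiplication by the constant $\lambda(B(r))$.

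Next, for any $t > 0$ I would write, pointwise in $\omega$,
\[ \sup_{z \in B(r)} |I_n(z;R)| \leq \max\left( \sup_{z \in B(r)} I_n(z;R),\, -\inf_{z \in B(r)} I_n(z;R) \right), \]
so by a union bound
\[ \P\!\left( \tfrac{1}{n} \sup_{z \in B(r)} |I_n(z;R)| \geq t \right) \leq \P\!\left( \tfrac{1}{n} \sup_{z \in B(r)} I_n(z;R) \geq t \right) + \P\!\left( \tfrac{1}{n} \inf_{z \in B(r)} I_n(z;R) \leq -t \right). \]
Lemma \ref{lem:I-UB} bounds the first term by $C/t$ for all $n$. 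For the second, I would apply Lemma \ref{lem:I-gen} with the choice $K := t/B$ (valid provided $t \geq B$), giving
\[ \limsup_{n \to \infty} \P\!\left( \tfrac{1}{n} \inf_{z \in B(r)} I_n(z;R) \leq -t \right) \leq \frac{B}{t}. \]
Combining, $\limsup_{n \to \infty} \P(\frac{1}{n}\sup_{z \in B(r)}|I_n(z;R)| \geq t) \leq (B + C)/t$, which tends to $0$ as $t \to \infty$. This establishes tightness of $\frac{1}{n}\sup_{z \in B(r)} |I_n(z;R)|$ and hence, via the first display, tightness of the sequence in question.

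I do not anticipate a serious obstacle here: all the heavy lifting has already been done in Lemmas \ref{lem:I-UB} and \ref{lem:I-gen}, and the argument amounts to a union bound together with the trivial estimate $\int f^2 \leq \lambda(\text{domain}) \cdot \|f\|_\infty^2$. The only point to watch is the implicit assumption (inherited from the preceding lemmas) that $0 \notin F$, which has already been justified in the surrounding discussion by a translation argument.
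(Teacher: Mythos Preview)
Your approach is correct and essentially identical to the paper's: both bound the integral by $\lambda(B(r))$ times the squared sup-norm of $\tfrac{1}{n}I_n(\cdot;R)$ and then combine the one-sided estimates from Lemmas~\ref{lem:I-UB} and~\ref{lem:I-gen} via a union bound. The only point the paper handles more explicitly is that your $\limsup_{n\to\infty}$ bound alone does not yield tightness over \emph{all} $n$; you should note (as the paper does, by invoking continuity of measure for $1\le n\le N$) that each of the finitely many initial terms is almost surely finite and hence automatically tight.
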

\begin{proof}
We want to show that for all $\eps > 0$ there exists $t > 0$ so that
\begin{equation}\label{eq:tightbound}
	\P\left(\frac{1}{n^2} \int_{B(r)} \left[ I_n(z;R) \right]^2 \d\lambda(z) \geq t\right) \leq \eps
\end{equation}
for all $n \geq 1$.
Clearly \eqref{eq:tightbound} holds when $\eps \geq 1$, so let $0 < \eps < 1$. 
For any $n \geq 1$, we can bound
\begin{equation}\label{eq:intibound}
\int_{B(r)} \left[ I_n(z;R) \right]^2 \d\lambda(z) \leq \pi r^2\max\left\{\left[\inf_{z \in B(r)}  I_n(z;R) \right]^2, \left[  \sup_{z \in B(r)} I_n(z;R) \right]^2\right\}.
\end{equation}
Observe that if the maximum on the right-hand side of  \eqref{eq:intibound} is given by 
\[ \left[\sup_{z \in B(r)} I_n(z;R)\right]^2, \]
we must have that  $\sup_{z \in B(r)} I_n(z;R) \geq 0$. Similarly, if the maximum is given by  $\left[\inf_{z \in B(r)}  I_n(z;R) \right]^2$, then $\inf_{z \in B(r)} I_n(z;R) \leq 0$. 
Let $C$ be the positive constant from Lemma \ref{lem:I-UB}. Let $t_1 > 0$ be chosen large enough such that $\frac{Cr\sqrt{\pi}}{\sqrt{t_1}} < \frac{\eps}{2}.$  
Then we can bound
\begin{align*}
\mathbb{P}&\left(\frac{\pi r^2}{n^2} \left[  \sup_{z \in B(r)} I_n(z;R) \right]^2 \geq t_1, \sup_{z \in B(r)} I_n(z;R) \geq 0\right) \\
&\qquad\qquad\qquad\qquad\qquad= \mathbb{P}\left(\frac{1}{n} \sup_{z \in B(r)} I_n(z;R) \geq \sqrt{\frac{t_1}{\pi r^2}}\right)
 \leq \frac{Cr\sqrt{\pi}}{\sqrt{t_1}}
 < \frac{\eps}{2},
\end{align*} 
where the first inequality follows from Lemma \ref{lem:I-UB}. 
Similarly, let $B$ be the positive constant given in Lemma \ref{lem:I-gen}. Let $t_2 > 0$ be chosen large enough such that
$\frac{Br\sqrt{\pi}}{\sqrt{t_2}} < \frac{\eps}{4}$. Note that since $0 < \eps < 1$, $\frac{\sqrt{t_2}}{Br\sqrt{\pi}} > 1$. We have
\begin{align*}
\mathbb{P}&\left(\frac{\pi r^2}{n^2} \left[  \inf_{z \in B(r)} I_n(z;R) \right]^2 \geq t_2, \inf_{z \in B(r)} I_n(z;R) \leq 0\right) \\
&\qquad\qquad\qquad\qquad\qquad\qquad= \mathbb{P}\left(\frac{1}{n} \inf_{z \in B(r)} I_n(z;R) \leq -\sqrt{\frac{t_2}{\pi r^2}}\right) \\
&\qquad\qquad\qquad\qquad\qquad\qquad= \mathbb{P}\left(\frac{1}{n} \inf_{z \in B(r)} I_n(z;R) \leq (-B)\left(\frac{\sqrt{t_2}}{Br\sqrt{\pi}}\right)\right). 
\end{align*}
By Lemma  \ref{lem:I-gen}, there exists $N \in \mathbb{N}$ so that for all $n \geq N$, we have
\[ \mathbb{P}\left(\frac{1}{n} \inf_{z \in B(r)} I_n(z;R) \leq (-B)\left(\frac{\sqrt{t_2}}{Br\sqrt{\pi}}\right)\right) \leq \frac{Br\sqrt{\pi}}{\sqrt{t_2}} + \frac{\eps}{4}< \frac{\eps}{2} \]
by the choice of $t_2$.  
For the remaining $1 \leq n \leq N$, by continuity of measure, there exists $t_3 > 0$ so that 
\[ \sup_{1 \leq n \leq N} \P\left(\frac{1}{n} \inf_{z \in B(r)}I_n(z;R) \leq -t_3 \right) < \frac{\eps}{2}. \]
Letting $t = \max\{t_1, t_2, t_3\}$, for any $n \geq 1$, we have that
\begin{align*}
\mathbb{P}&\left(\frac{1}{n^2} \int_{B(r)} \left[ I_n(z;R) \right]^2 \d\lambda(z) \geq t\right) \\
&\qquad\qquad\qquad\qquad\leq \mathbb{P}\left(\frac{\pi r^2}{n^2} \left[  \sup_{z \in B(r)} I_n(z;R) \right]^2 \geq t, \sup_{z \in B(r)} I_n(z;R) \geq 0\right) \\
&\qquad\qquad\qquad\qquad\qquad + \mathbb{P}\left(\frac{\pi r^2}{n^2} \left[  \inf_{z \in B(r)} I_n(z;R) \right]^2 \geq t, \inf_{z \in B(r)} I_n(z;R) \leq 0\right) \\
&\qquad\qquad\qquad\qquad< \frac{\eps}{2} + \frac{\eps}{2} \\
&\qquad\qquad\qquad\qquad= \eps,
\end{align*}
which establishes \eqref{eq:tightbound}. 
\end{proof}

We are now in position to complete the proof of Lemma \ref{lem:tightness}.  

\begin{proof}[Proof of Lemma \ref{lem:tightness}]
Recall that $F$ denotes the exceptional set from Lemma \ref{lem:point}.  As noted in the paragraph preceding Lemma \ref{lem:I-0}, without loss of generality, we may assume $0 \not\in F$.  We assume $0 \not\in F$ for the remainder of the proof.  

By \eqref{eq:L-Jensen} and the Cauchy--Schwarz inequality, 
\begin{align} \label{eq:inteqbnd}
	\frac{1}{n^2} \int_{B(r)} \log^2 \left| \sum_{k=1}^m p_{n,k}(z) \right| \d\lambda(z) &\ll \frac{1}{n^2} \int_{B(r)} \left[ I_n(z;R) \right]^2 \d\lambda(z) \\
	&\quad+ \frac{ \ell_n }{ n^2 } \sum_{\ell =1}^{\ell_n}  \int_{B(r)} \log^2 \left| \frac{R(z - y_{\ell}^{(n)})}{R^2 - \overline{y_{\ell}^{(n)}}z} \right| \d\lambda(z), \nonumber
\end{align}
where $y_{1}^{(n)}, \ldots, y_{\ell_n}^{(n)}$ are the zeros of $\sum_{k=1}^m p_{n,k}$ in the disk $B(R)$. 

Note that for any $y \in B(R)$ and $z \in B(r)$, $|R^2 - \bar{y}z|$ is uniformly bounded below.  Since $\log | \cdot |$ is locally square integrable on $\mathbb{C}$, this implies that 
\[ \sup_{y \in B(R)} \int_{B(r)} \log^2 \left|\frac{R(z - y)}{R^2 - \overline{y}z} \right|\d\lambda(z) = O_{R, r}(1). \]
Thus, bounding $\ell_n \leq n$ yields the deterministic bound
\[ \frac{\ell_n}{n^2} \sum_{\ell = 1}^{\ell_n} \int_{B(r)} \log^2 \left| \frac{R(z - y_{\ell}^{(n)})}{R^2 - \overline{y_{\ell}^{(n)}}z} \right| \d\lambda(z) \leq \sup_{y \in B(R)} \int_{B(r)} \log^2 \left|\frac{R(z - y)}{R^2 - \overline{y}z} \right|\d\lambda(z) = O_{R, r}(1). \] 

Lemma \ref{lem:I-tight} implies that the sequence 
\[ \left\{ \frac{1}{n^2} \int_{B(r)} \left[ I_n(z;R) \right]^2 \d\lambda(z) \right\}_{n \geq 1} \]
is tight.  Since a tight sequence plus a deterministically bounded sequence is tight, we conclude from \eqref{eq:inteqbnd} that
\[ \left\{ \frac{1}{n^2} \int_{B(r)} \log^2 \left| \sum_{k=1}^m p_{n,k}(z) \right| \d \lambda(z) \right\}_{n\geq 1} \]
is tight, and the proof is complete.
\end{proof}

\section{Proof of Theorem \ref{thm:nolog}} \label{sec:nolog}
This section is devoted to the proof of Theorem \ref{thm:nolog}.  We begin with some lemmata which we will need for the proof.  

\begin{lemma}\label{lem:Fmur}
If $\mu \in \mathcal{P}(\mathbb{C})$, then for each point $x \in \C$ and Lebesgue almost all $r \in (0, \infty)$, we have 
\[ \int_{\C} \lp \frac{1}{||w  - x| - r|} \d\mu(w) < \infty. \]
\end{lemma}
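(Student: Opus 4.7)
My plan is to prove Lemma \ref{lem:Fmur} by a direct Fubini argument, showing that the integral of the bad function over any bounded interval of radii is finite.

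Fix $x \in \C$ and, for $r > 0$, set
\[ f(r) := \int_{\C} \lp \frac{1}{||w-x| - r|} \d\mu(w). \]
The goal is to show $f(r) < \infty$ for Lebesgue a.e. $r \in (0,\infty)$. Since $f \geq 0$, it suffices to show $\int_0^R f(r)\, dr < \infty$ for every $R > 0$, and then let $R$ range over a sequence tending to infinity.

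By Tonelli's theorem,
\[ \int_0^R f(r)\, dr = \int_{\C} \left( \int_0^R \lp \frac{1}{||w-x|-r|} \, dr \right) \d\mu(w). \]
The key step is a uniform bound, over all $w \in \C$, of the inner integral. Writing $s := |w-x|$ and making the substitution $u = s - r$ transforms the inner integral into
\[ \int_{s-R}^{s} \lp \frac{1}{|u|}\, du = \int_{s-R}^{s} \lm |u| \, du, \]
where I have used that $\lp(1/|u|) = \lm |u|$ in the notation of the paper. Since $\lm |u|$ vanishes outside $[-1,1]$, this is bounded by $\int_{-1}^{1} |\log|u||\, du = 2$, uniformly in $s = |w-x|$.

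Substituting this uniform bound back yields $\int_0^R f(r)\, dr \leq 2\mu(\C) = 2 < \infty$, which forces $f(r) < \infty$ for Lebesgue-a.e.\ $r \in [0,R]$. Taking a countable union over $R \in \mathbb{N}$ gives the conclusion. I do not anticipate any serious obstacle: the only mild subtlety is verifying that the substitution bound really is uniform in $s$, which works precisely because $\lm$ is supported on $[-1,1]$ and is integrable there.
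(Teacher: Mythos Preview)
Your proof is correct and follows essentially the same Fubini/Tonelli argument as the paper: swap the order of integration, bound the inner integral in $r$ by $\int_{-1}^{1}|\log|u||\,du=2$ via the substitution $u=|w-x|-r$, and conclude finiteness almost everywhere. The only cosmetic differences are that the paper shifts to $x=0$ and integrates directly over $(0,\infty)$ rather than over $[0,R]$ with a countable union, but the substance is identical.
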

\begin{proof}
By shifting the measure $\mu$, we may assume without loss of generality that $x = 0$.  In this case, by Fubini's theorem, we have
\begin{align*}
	\int_{0}^\infty \int_{\C} \lp \frac{1}{||w| - r|}\d\mu(w)\d r &= \int_{\C}\int_{0}^\infty  \lp \frac{1}{||w| - r|}\d r \d\mu(w) \\
	&\leq \int_{\C} \int_{-1}^1 \log|1/s| \d s \d\mu(w) = 2, 
\end{align*}
where we used the fact $\int_{-1}^1 \log|1/s| \d s = 2$.  We conclude that
\[ \int_{\C} \lp \frac{1}{||w| - r|}\d\mu(w) < \infty \]
for Lebesgue almost every $r > 0$.  
\end{proof}

For $\mu \in \mathcal{P}(\mathbb{C})$ and each $x \in \C$, define 
\[ F_x^{\mu} := \left\{r > 0 : \int_{\C} \log_+ \left|\frac{1}{|w - x| - r} \right|\,d\mu(w)  = \infty\right\}. \]
By Lemma \ref{lem:Fmur}, the set $F_x^{\mu}$ has Lebesgue measure $0$ for every $x \in \mathbb{C}$.  In addition, if $X$ is a random variable with distribution $\mu$, then $F_x^{\mu}$ contains the atoms of $|X - x|$.  

\begin{lemma} \label{lem:replace-Mahler}
Let $\mu \in \mathcal{P}(\mathbb{C})$, and define the random polynomial
\[ f_n(z) := \prod_{j=1}^n (z - X_j), \]
where $X_1, X_2, \ldots$ are iid random variables with distribution $\mu$.  Then for each $x \in \mathbb{C}$ and all $r \in (0, \infty) \setminus F_x^{\mu}$, almost surely
\[ \limsup_{n\to\infty} \frac{1}{n} \max_{|z| = r} \left| \log |f_n(x + z)| - \log M(f_n) \right| < \infty, \]
where $M(f_n)$ is the Mahler measure of $f_n$, defined in \eqref{eq:def:Mf}.  
\end{lemma}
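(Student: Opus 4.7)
The plan is to use the factorizations
\[ \log|f_n(x+z)| = \sum_{j=1}^n \log|z - (X_j - x)|, \qquad \log M(f_n) = \sum_{j=1}^n \lp|X_j|, \]
together with the triangle-inequality envelope
\[ \log\big||X_j - x| - r\big| \;\leq\; \log|z - (X_j - x)| \;\leq\; \log(r + |X_j - x|) \qquad (|z|=r) \]
to bound $\max_{|z|=r}\big|\log|f_n(x+z)| - \log M(f_n)\big|$ above by the maximum of the two sums
\[ \sum_{j=1}^n\big[\log(r + |X_j - x|) - \lp|X_j|\big] \quad\text{and}\quad \sum_{j=1}^n\big[\lp|X_j| - \log\big||X_j - x| - r\big|\big]. \]
It therefore suffices to show that each of these sums is almost surely $O(n)$.

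The first sum is controlled deterministically. A short case analysis on whether $|X_j| \leq 1$ shows that $\log(r + |X_j - x|) - \lp|X_j| \leq \log(1 + r + |x|)$: when $|X_j| \leq 1$ the left term is at most $\log(1 + r + |x|)$ while the right is zero, and when $|X_j| > 1$ one has $\log(r + |X_j - x|) - \log|X_j| \leq \log\big(1 + (r + |x|)/|X_j|\big) \leq \log(1 + r + |x|)$. Thus the first sum is always at most $n \log(1 + r + |x|)$, with no probabilistic input needed.

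The second sum is the main obstacle, because $\log\big||X_j - x| - r\big|$ can be arbitrarily negative when $|X_j - x|$ is close to $r$. Using the identity $\log|a| = \lp|a| - \lp(1/|a|)$, I decompose each summand as
\[ \big(\lp|X_j| - \lp\big||X_j - x| - r\big|\big) \;+\; \lp\frac{1}{\big||X_j - x| - r\big|}. \]
An analogous elementary comparison shows the parenthesized difference is bounded by a constant $C_2 = C_2(r, x)$ (the two $\lp$'s differ multiplicatively by a factor that stays bounded on the tail and yield values of at most $O(1)$ on bounded regions of $X_j$). The remaining summand is nonnegative, and by the hypothesis $r \notin F_x^\mu$,
\[ \mathbb{E}\left[\lp\frac{1}{\big||X_1 - x| - r\big|}\right] = \int_{\mathbb{C}} \lp\frac{1}{\big||w - x| - r\big|}\,d\mu(w) < \infty, \]
so the strong law of large numbers yields almost sure convergence of $\frac{1}{n}\sum_{j=1}^n \lp(1/\big||X_j - x| - r\big|)$ to this finite value. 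Combining the two bounds gives the desired $\limsup_n \frac{1}{n}\max_{|z|=r}\big|\log|f_n(x+z)| - \log M(f_n)\big| < \infty$ almost surely. The entire difficulty concentrates in the singular term; the exclusion $r \notin F_x^\mu$ is tailored precisely to secure its integrability and hence the applicability of the SLLN.
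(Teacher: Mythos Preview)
Your proof is correct and follows essentially the same approach as the paper: both arguments sandwich $\log|z-(X_j-x)|$ between $\log\bigl||X_j-x|-r\bigr|$ and $\log(r+|X_j-x|)$, handle the upper envelope deterministically, and control the lower envelope by invoking the strong law of large numbers on the singular term $\lp\bigl(1/\bigl||X_j-x|-r\bigr|\bigr)$, whose finite mean is exactly the content of $r\notin F_x^\mu$. The only structural difference is that the paper first treats $x=0$ and then reduces the general case via the shift $g_n(z)=f_n(x+z)$ together with the comparison $|\log M(f_n)-\log M(g_n)|=O_x(n)$, whereas you carry the shift $x$ through the estimates directly; this is slightly more economical but mathematically equivalent.
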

\begin{proof}
We first prove the lemma in the case when $x = 0$.  
Let $r \in (0, \infty) \setminus F_0^{\mu}$.  For $|z| = r$, we upper bound 
\begin{align*}
	\log|f_n(z)| &= \sum_{j=1}^n \log |z - X_j| \\
	&\leq \sum_{j=1}^n \log (1 + r + |X_j|) \\
	&\leq \sum_{j : |X_j| \leq 1} \log (2 + r) + \sum_{j : |X_j| > 1} \left( \log |X_j| + \log (2+r) \right) \\
	&\leq n\log(2 + r) + \log M(f_n).
\end{align*}
Thus, we obtain the deterministic bound 
\begin{equation} \label{eq:maxzub}
	\max_{|z| = r} \frac{1}{n} \log |f_n(z)| \leq \frac{1}{n} \log M(f_n) + \log(2 + r). 
\end{equation} 
	
For the other direction, we bound 
\begin{align*}
	\min_{|z| = r} \log |f_n(z)| &\geq \sum_{j=1}^n \min_{|z|= r} \log |z - X_j| \\
	&= \sum_{j=1}^n\log |r - |X_j| | \\
	&\geq \sum_{j: |X_j| > r+1 } \log (|X_j| - r) - \sum_{j=1}^n \lp \left| \frac{1}{|X_j| - r}  \right| \\
	&= \sum_{j: |X_j| > r+1 } \log |X_j| + \sum_{j: |X_j| > r+1 } \log \left( 1 - \frac{r}{|X_j|} \right) - \sum_{j=1}^n \lp \left| \frac{1}{|X_j| - r} \right| \\
	&\geq \log M(f_n) - \sum_{j: 1 < |X_j| \leq r+1 } \log |X_j| - n \log(1 + r) - \sum_{j=1}^n \lp \left| \frac{1}{|X_j| - r} \right| \\
	&\geq \log M(f_n) - 2n \log(1 + r) - \sum_{j=1}^n \lp \left| \frac{1}{|X_j| - r} \right|.  
\end{align*}
For the last term on the right-hand side, the law of large numbers implies that almost surely 
\[ \frac{1}{n}\sum_{j=1}^n \lp \left|\frac{1}{|X_j| - r}\right| \longrightarrow \int_{\C} \log_+\left| \frac{1}{|w| - r} \right|\d\mu(w) < \infty, \] 
where we used the assumption that $r \not\in F_0^\mu$.  Together with \eqref{eq:maxzub}, the bounds above complete the proof of the lemma in the case when $x = 0$.  

Fix $x \in \mathbb{C}$ with $x \neq 0$, and define $g_n(z) := f_n(x + z)$.  The roots of $g_n$ are simply the roots of $f_n$ shifted by $x$.  If $\nu$ is the probability measure formed from shifting $\mu$ by $x$, then the roots of $g_n$ are drawn independently from $\nu$.  Since $\mu \in \mathcal{P}(\mathbb{C})$ was arbitrary in the argument above, we can apply that argument to $\nu$ to conclude that, for all $r \in  (0, \infty) \setminus F_0^\nu = (0, \infty) \setminus F_x^\mu$, almost surely
\[ \limsup_{n \to \infty} \frac{1}{n} \max_{|z| = r} | \log |g_n(z)| - \log M(g_n) | < \infty. \]
Thus, in order to complete the proof, it remains to show
\begin{equation} \label{eq:MfnMgn}
	| \log M(f_n) - \log M(g_n) | = O_x(n) 
\end{equation}
almost surely.  Since $x \in \mathbb{C}$ is fixed, the bound in \eqref{eq:MfnMgn} follows from a simple term-by-term comparison using \eqref{eq:def:Mf}.  This completes the proof of the lemma.  
\end{proof}

We will need the following deterministic proposition in order to complete the proof of Theorem \ref{thm:nolog}.  In order to handle cases \eqref{item:infty}, \eqref{item:-infty}, and \eqref{item:both} simultaneously, we assume the two polynomials have degree $d_n$ for an arbitrary degree sequence $\{d_n\}$.  We use the notation
\[ \Q + i \Q = \{a + ib : a, b \in \mathbb{Q} \}, \]
where $i$ denotes the imaginary unit.  We let $\partial B$ be the boundary of $B \subset \mathbb{C}$.  

\begin{proposition}\label{pr:deterministic}
Let $\{\alpha_n\}_{n \geq 1}$ and $\{\beta_n\}_{n \geq 1}$ be sequences of complex numbers, and define the sequence of polynomials
\[ f_n(z) := \prod_{j=1}^{d_n} (z - \alpha_j), \qquad g_n(z) := \prod_{j=1}^{d_n} (z - \beta_j) \]
for a degree sequence $\{d_n\}_{n \geq 1}$.  Suppose the empirical measure
\[ \frac{1}{d_n} \sum_{j=1}^{d_n} \delta_{\alpha_j} \]
constructed from the roots of $f_n$ converges weakly to a measure $\mu \in \mathcal{P}(\mathbb{C})$ as $n \to \infty$.  Assume, for each $x \in \Q + i \Q$, there is a countable dense subset $R_x \subset (0, \infty)$ so that, for all $r \in R_x$, 
 \begin{equation}\label{eq:assumption-circles-f}
	\limsup_{n \to \infty} \frac{1}{d_n} \max_{|z| = r}\left| \log |f_n(x + z)| - \log M(f_n) \right| < \infty  
\end{equation} 
and
\begin{equation}\label{eq:assumption-circles-g}
	\limsup_{n \to \infty} \frac{1}{d_n} \max_{|z| = r}\left| \log |g_n(x + z)| - \log M(g_n) \right| < \infty.  
\end{equation}
In addition, assume for each $x \in \Q + i \Q$ and $r \in R_x$ that $\mu( \partial B(x, r)) = 0$.  
If 
\begin{equation}\label{eq:assumption-mahler}
	\lim_{n \to \infty} \frac{1}{d_n} \left(\log M(f_n) - \log M(g_n) \right)  = + \infty, 
\end{equation}
then the empirical measure 
\[ \rho_n := \frac{1}{d_n} \sum_{j=1}^{d_n} \delta_{z_j^{(n)}} \]
formed from the roots $z_1^{(n)}, \ldots, z_{d_n}^{(n)}$ of $f_n + g_n$ converges weakly to $\mu$ as $n \to \infty$.  
\end{proposition}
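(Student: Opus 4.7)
The plan is to deduce the conclusion via Rouché's theorem, using the Mahler measure as a common reference scale on which to compare $|f_n|$ and $|g_n|$. Hypothesis \eqref{eq:assumption-mahler} says $\log M(f_n)$ dominates $\log M(g_n)$ on a scale linear in $d_n$, while \eqref{eq:assumption-circles-f} and \eqref{eq:assumption-circles-g} say the values of $\log|f_n|$ and $\log|g_n|$ on small circles differ from $\log M(f_n)$ and $\log M(g_n)$ only by $O(d_n)$. So the Mahler dominance beats the circle-to-Mahler discrepancy, forcing $|f_n| > |g_n|$ on a rich family of circles, and Rouché then transfers root counts.

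More concretely: fix $x \in \Q + i\Q$ and $r \in R_x$. From \eqref{eq:assumption-circles-f} and \eqref{eq:assumption-circles-g} pick a constant $C_{x,r} > 0$ so that for all sufficiently large $n$,
\[
\min_{|z-x|=r} \log|f_n(z)| \geq \log M(f_n) - C_{x,r} d_n, \qquad \max_{|z-x|=r} \log|g_n(z)| \leq \log M(g_n) + C_{x,r} d_n.
\]
Subtracting and invoking \eqref{eq:assumption-mahler} gives
\[
\min_{|z-x|=r} \log|f_n(z)| - \max_{|z-x|=r} \log|g_n(z)| \geq \log M(f_n) - \log M(g_n) - 2 C_{x,r} d_n \longrightarrow +\infty,
\]
so $|f_n(z)| > |g_n(z)|$ on $\partial B(x, r)$ for all large $n$. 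Rouché's theorem then implies that $f_n$ and $f_n + g_n$ have the same number of zeros (counted with multiplicity) inside $B(x, r)$, i.e.
\[
\rho_n(B(x,r)) = \frac{1}{d_n} \#\{ j : \alpha_j \in B(x, r)\}
\]
for all large $n$. Since the empirical measure of the $\alpha_j$ converges weakly to $\mu$ and $\mu(\partial B(x, r)) = 0$, the Portmanteau theorem yields $\rho_n(B(x, r)) \to \mu(B(x, r))$.

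Finally, I upgrade pointwise convergence on balls to weak convergence. Taking $x = 0$ and letting $r \in R_0$ be arbitrarily large, together with $\mu(B(0, r)) \to 1$, shows the sequence $\{\rho_n\}$ is tight. The family $\{B(x, r) : x \in \Q + i\Q, \; r \in R_x\}$ has dense radii and is rich enough to uniformly approximate any continuous compactly supported test function by finite linear combinations of indicators of such balls (using also $\mu(\partial B(x,r)) = 0$), so tightness combined with the convergence $\rho_n(B(x, r)) \to \mu(B(x, r))$ on this family yields $\int \varphi \, d\rho_n \to \int \varphi \, d\mu$ for every bounded continuous $\varphi$.

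I do not expect a serious obstacle here: the Rouché step is essentially a one-line comparison once the hypotheses are lined up, and the measure-theoretic upgrade is standard. The one point requiring a little care is the tightness step, which is why it is useful that $R_0$ is dense in $(0, \infty)$ rather than merely nonempty, and the observation that the zeros on the boundary circles cause no issue since the strict inequality $|f_n| > |g_n|$ already prohibits zeros of $f_n + g_n$ on $\partial B(x, r)$.
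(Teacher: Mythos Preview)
Your Rouché comparison is exactly the paper's strategy, and the first two paragraphs match the paper's argument essentially line for line: extract the bounds $\min_{\partial B(x,r)}\log|f_n|\ge\log M(f_n)-C d_n$ and $\max_{\partial B(x,r)}\log|g_n|\le\log M(g_n)+C d_n$ from \eqref{eq:assumption-circles-f}--\eqref{eq:assumption-circles-g}, invoke \eqref{eq:assumption-mahler}, apply Rouché, and use Portmanteau with $\mu(\partial B(x,r))=0$.

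The point that needs attention is your final upgrade from $\rho_n(B(x,r))\to\mu(B(x,r))$ on single balls to full weak convergence. The claim that one can \emph{uniformly} approximate a continuous compactly supported $\varphi$ by finite linear combinations $\sum c_j\mathbf{1}_{B_j}$ of ball indicators is not correct in $\mathbb{C}$: such a combination is a step function whose jump set is a finite union of circles, and one cannot carve out, e.g., the lens $B_1\cap B_2$ this way (two distinct circles meet in at most two points, so no finite union of other circles can cancel the jump along the unwanted arc of $\partial B_1$). A tightness-plus-subsequential-limit argument does rescue the step, but it ultimately rests on the separate fact that a finite Borel measure on $\mathbb{R}^2$ is determined by its values on Euclidean balls, which you would then need to justify.

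The paper avoids this by enlarging the test class to the $\pi$-system $\mathcal{A}$ of \emph{finite intersections} $B=B_1\cap\cdots\cap B_k$ of balls $B(x,r)$ with $x\in\Q+i\Q$, $r\in R_x$. Since every open set is a countable union of members of $\mathcal{A}$, Theorem~2.2 of Billingsley reduces weak convergence to $\rho_n(B)\to\mu(B)$ for each $B\in\mathcal{A}$. Your inequality $|f_n|>|g_n|$ still holds on $\partial B\subset\bigcup_l\partial B_l$ for large $n$, and the general (homological) form of Rouché's theorem---valid for such simply bounded domains, cf.\ Exercise~24, Chapter~10 of Rudin's \emph{Real and Complex Analysis}---gives equal root counts for $f_n$ and $f_n+g_n$ in $B$. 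Swapping in this $\pi$-system closes the gap with no extra work on the Rouché side.
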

\begin{proof}
Let $\mathcal{A}'$ be the collection of all open balls $B(x, r)$, where $x \in \Q + i \Q$ and $r \in R_x$.  Let $\mathcal{A}$ be the collection of all finite intersections of the open balls from $\mathcal{A}'$.  It follows that $\mathcal{A}$ is a $\pi$-system and that any open set in $\mathbb{C}$ can be written as a countable union of sets in $\mathcal{A}$.  Thus, by Theorem 2.2 in \cite{MR1700749}, it suffices to show that $\rho_n(B) \to \mu(B)$ as $n \to \infty$ for any $B \in \mathcal{A}$.  

Fix $B \in \mathcal{A}$.  Then $B = B_1 \cap \cdots \cap B_k$, where $B_1, \ldots, B_k \in \mathcal{A}'$.  By supposition, $\mu(\partial B) = 0$ (i.e., $ B$ is a $\mu$-continuity set).  Thus, since the empirical measure constructed from the roots of $f_n$ converges weakly to $\mu$ by assumption, it follows that
\[ \frac{ | \{ 1 \leq j \leq d_n : \alpha_j \in B \} |}{ d_n } \longrightarrow \mu(B) \]
as $n \to \infty$, where $|S|$ denotes the cardinality of the finite set $S$.  Therefore, to complete the proof, it suffices to show that $f_n$ has the same number of roots in $B$ as $f_n + g_n$ for all $n$ sufficiently large.  For this we will use Rouch\'e's theorem.  Indeed, by suppositions \eqref{eq:assumption-circles-f} and \eqref{eq:assumption-circles-g}, there exists a constant $C > 0$ so that 
\[ \frac{1}{d_n} \log |f_n(z)| \geq \frac{1}{d_n} \log M(f_n) - C \]
and 
\[ \frac{1}{d_n} \log |g_n(z)| \leq \frac{1}{d_n} \log M(g_n) + C \]
for all $z \in \partial B_l$, $1 \leq l \leq k$ and all sufficiently large values of $n$.  In view of \eqref{eq:assumption-mahler} and the fact that $\partial B \subset \cup_{l=1}^k \partial B_l$, we conclude that
\[ |g_n(z)| < |f_n(z)| \]
for all $z \in \partial B$ and all sufficiently large values of $n$.  Therefore, by Rouch\'e's theorem (see Exercise 24 in Chapter 10 of \cite{MR924157} for the general form of Rouch\'e's theorem used here), $f_n$ and $f_n + g_n$ have the same number of roots in $B$ for all $n$ sufficiently large.  This completes the proof of the proposition.  
\end{proof}

We now complete the proof of Theorem \ref{thm:nolog}.  

\begin{proof}[Proof of Theorem \ref{thm:nolog}]
To start, assume case \eqref{item:infty} holds.  Recall that this means the event 
\[ \Omega_1 := \left\{ \lim_{n \to \infty} \frac{1}{n} \left( \log M(p_n) - \log M(q_n) \right) = +\infty \right\} \]
holds with probability $1$.  

In view of Lemmas \ref{lem:Fmur} and \ref{lem:replace-Mahler}, for each $x \in \Q + i \Q$, let $R_x \subset (0, \infty)$ be a countable dense set\footnote{Since $\mu$ and $\nu$ are probability measures, for any fixed $x \in \mathbb{C}$, there is at most a countable number of values for $r > 0$ so that $\mu( \partial B(x,r)) + \nu( \partial B(x,r)) > 0$.} with $\mu( \partial B(x, r)) = 0 = \nu( \partial B(x, r))$ for all $r \in R_x$ and such that 
\[ \limsup_{n\to\infty} \frac{1}{n} \max_{|z| = r} \left| \log |p_n(x + z)| - \log M(p_n) \right| < \infty \]
and
\[ \limsup_{n\to\infty} \frac{1}{n} \max_{|z| = r} \left| \log |q_n(x + z)| - \log M(q_n) \right| < \infty \]
almost surely for $r \in R_x$.  Let $\Omega_2$ be the event that these two limit superiors are finite for all $x \in \Q + i \Q$ and each $r \in R_x$.  Since $\Q + i \Q$ is countable and, for each $x \in \Q + i \Q$, $R_x$ is countable, it follows that $\Omega_2$ holds with probability $1$.  Let $\Omega_3$ be the event that the empirical measure
\[ \frac{1}{n} \sum_{i=1}^n \delta_{X_i} \]
constructed from the roots of $p_n$ converges weakly to $\mu$ as $n \to \infty$.  It follows from Proposition \ref{prop:random-measures} and the law of large numbers that $\Omega_3$ holds with probability $1$, and hence the event $\Omega := \Omega_1 \cap \Omega_2 \cap \Omega_3$ also holds with probability $1$.  
Proposition \ref{pr:deterministic} implies that on $\Omega$, $\rho_n$ converges weakly to $\mu$ as $n \to \infty$.  This completes the proof of case \eqref{item:infty}.  

By reversing the roles of $p_n$ and $q_n$, an identical argument applies when case \eqref{item:-infty} holds; we omit the details.  

Finally, assume case \eqref{item:both} holds.  Recall that this means the event
\[ \Omega_4 := \left\{ \limsup S_n = +\infty \text{ and } \liminf S_n = - \infty \right\} \]
holds with probability $1$, where $S_n$ is defined in \eqref{eq:Sn}.  Let $\Omega_5$ be the event that the empirical measure
\[ \frac{1}{n} \sum_{i=1}^n \delta_{Y_i} \]
constructed from the roots of $q_n$ converges weakly to $\nu$ as $n \to \infty$.  
It follows from Proposition \ref{prop:random-measures} and the law of large numbers that $\Omega_5$ holds with probability $1$. 
Fix a realization $\omega \in \Omega_2 \cap \Omega_3 \cap \Omega_4 \cap \Omega_5$.  Then there exist subsequences $\{n_k\}$ and $\{ n_l\}$ (depending on $\omega$) so that $\lim_{k \to \infty} S_{n_k} = +\infty$ and $\lim_{l \to \infty} S_{n_l} = - \infty$.  Applying Proposition \ref{pr:deterministic} along these subsequences, we conclude that $\rho_{n_k}$ converges weakly to $\mu$ as $k \to \infty$ and $\rho_{n_l}$ converges weakly to $\nu$ as $l \to \infty$.  Since $\Omega_2 \cap \Omega_3 \cap \Omega_4 \cap \Omega_5$ holds with probability $1$, this completes the proof of case  \eqref{item:both}.  
\end{proof}

\appendix
\section{Convergence of random measures}

Recall that $B(r)$ is the open ball of radius $r > 0$ centered at the origin in the complex plane.  The following result concerning the weak convergence of random probability measures can be deduced from the more general results in \cite{MR2236634}; we provide a proof for completeness.  

\begin{proposition} \label{prop:random-measures}
Let $\{\mu_n\}_{n \geq 1}$ be a sequence of random probability measures on $\mathbb{C}$, all defined on the same probability space, and let $\mu \in \mathcal{P}(\mathbb{C})$ be deterministic.  If, for every bounded continuous function $f: \mathbb{C} \to \mathbb{C}$, 
\begin{equation} \label{eq:bndctscon}
	\int_{\mathbb{C}} f \d \mu_n \longrightarrow \int_{\mathbb{C}} f \d \mu 
\end{equation} 
almost surely as $n \to \infty$, then
\begin{equation} \label{eq:convprob1}
	\P \left( \{\mu_n\} \text{ converges weakly to } \mu \text{ as } n \to \infty \right) = 1. 
\end{equation}
\end{proposition}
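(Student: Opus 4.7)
The subtlety is that the hypothesis gives us, for each individual bounded continuous $f$, a null event $N_f$ outside of which $\int f\, d\mu_n \to \int f\, d\mu$, but there are uncountably many such $f$, so we cannot directly take a union over all of them. The plan is to pin down a \emph{countable} subfamily of $C_b(\mathbb{C})$ that is still rich enough to characterize weak convergence of probability measures, take a union over the associated null events, and then run a tightness argument to upgrade the resulting pointwise convergence to full weak convergence on the good event.

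First, I would exploit that $\mathbb{C}$ is $\sigma$-compact and separable, so that $C_c(\mathbb{C})$ is separable in the supremum norm. Let $\{f_k\}_{k \geq 1}$ be a countable dense subset of $C_c(\mathbb{C})$. For each $k$, the hypothesis produces a null event $N_k$ outside of which $\int f_k\, d\mu_n \to \int f_k\, d\mu$. Setting $N := \bigcup_{k \geq 1} N_k$ gives $\P(N) = 0$, and on $N^c$ we have $\int f_k\, d\mu_n \to \int f_k\, d\mu$ simultaneously for every $k$. A routine $\eps/3$ approximation, using that each $\mu_n$ and $\mu$ has total mass one, then extends this to $\int f\, d\mu_n \to \int f\, d\mu$ for every $f \in C_c(\mathbb{C})$.

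Next, I would upgrade from vague to weak convergence on $N^c$ via Prokhorov's theorem. For each $R > 0$, pick $\chi_R \in C_c(\mathbb{C})$ with $0 \leq \chi_R \leq 1$, $\chi_R \equiv 1$ on $B(R)$, and $\supp(\chi_R) \subset B(R+1)$. Since $\mu$ is a probability measure, for any $\eps > 0$ one can choose $R$ with $\int \chi_R\, d\mu > 1 - \eps$, and hence on $N^c$, $\mu_n(B(R+1)) \geq \int \chi_R\, d\mu_n > 1 - 2\eps$ for all large $n$. Thus $\{\mu_n\}$ is tight on $N^c$, so every subsequence has a further subsequence converging weakly to some probability measure $\nu$; since $\int f\, d\nu = \int f\, d\mu$ for all $f \in C_c(\mathbb{C})$ and $C_c(\mathbb{C})$ separates Borel probability measures on $\mathbb{C}$, we must have $\nu = \mu$. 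A standard subsequence argument then yields full weak convergence $\mu_n \to \mu$ on $N^c$, which is precisely \eqref{eq:convprob1}.

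The only conceptually nontrivial point is the countable-selection step at the start, which bypasses the need to control uncountably many null sets; the rest is standard real analysis (separability of $C_c(\mathbb{C})$ and the Prokhorov tightness upgrade). If one preferred to avoid Prokhorov, one could alternatively enrich the countable family $\{f_k\}$ to include the cutoffs $\chi_R$ for rational $R$ and directly verify the portmanteau condition $\limsup \mu_n(K) \leq \mu(K)$ for compact $K$ together with $\liminf \mu_n(U) \geq \mu(U)$ for open $U$, but the Prokhorov route is the cleanest.
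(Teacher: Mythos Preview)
Your proof is correct and follows the same high-level skeleton as the paper's: isolate a countable family of test functions, pass to a single full-probability event on which all of them converge, establish tightness there, and then run the Prokhorov subsequence argument to identify the limit as $\mu$.

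The only real difference is in the choice of countable determining class. You take a sup-norm dense sequence in $C_c(\mathbb{C})$ and then push convergence to all of $C_c(\mathbb{C})$ by an $\eps/3$ argument; the paper instead uses the characteristic functions $z \mapsto e^{i(s\Re z + t\Im z)}$ at rational pairs $(s,t)$, together with a separate countable family of bump functions to get tightness, and identifies the limit by continuity of characteristic functions. Your route is arguably a bit more streamlined, since a single dense family in $C_c(\mathbb{C})$ handles both the tightness (via the cutoffs $\chi_R$, which lie in $C_c$) and the identification of the limit, whereas the paper keeps these two tasks in separate countable families. Conversely, the paper's characteristic-function argument avoids appealing to separability of $C_c(\mathbb{C})$ and to the fact that $C_c$ separates probability measures, replacing those by the more elementary uniqueness theorem for characteristic functions. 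Either way the substance is the same.
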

\begin{proof}
For any $k \in \mathbb{N}$, let $r_k > 0$ be such that $\mu(\mathbb{C} \setminus B(r_k)) < 1/k$.  Let $f_k: \mathbb{C} \to [0,1]$ be a continuous function so that $f_k(z) = 1$ for all $z \in B(r_k)$ and $f_k(z) = 0$ for all $z \notin B(r_k + 1)$.  Let $\Omega_1$ be the event that
\[ \int_{\mathbb{C}} f_k \d \mu_n \longrightarrow \int_{\mathbb{C}} f_k \d \mu \]
for all $k \in \mathbb{N}$.  By supposition \eqref{eq:bndctscon} and the fact that $\mathbb{N}$ is countable, $\Omega_1$ holds with probability $1$.  

In addition, on $\Omega_1$, the sequence $\{\mu_n\}_{n \geq 1}$ is tight.  Indeed, given any $\eps > 0$, there exists $1/k < \eps$ so that 
\[ \mu_n(B(r_k+1)) \geq \int_{\mathbb{C}} f_k \d \mu_n \longrightarrow \int_{\mathbb{C}} f_k \d \mu > \mu(B(r_k)) \geq 1- 1/k > 1 - \eps \]
on $\Omega_1$.  

Define the (random) characteristic function of $\mu_n$ by
\[ \phi_n(s,t) := \int_{\mathbb{C}} e^{i (s \Re(z) + t \Im(z))} \d \mu_n(z), \qquad s,t \in \mathbb{R} \]
and the deterministic characteristic function of $\mu$ by
\[ \phi(s,t) := \int_{\mathbb{C}} e^{i (s \Re(z) + t \Im(z))} \d \mu(z), \qquad s,t \in \mathbb{R}. \]
Let $\Omega_2$ be the event that $\phi_n(s,t) \to \phi(s,t)$ for all $s, t \in \mathbb{Q}$.  It follows from assumption \eqref{eq:bndctscon} that $\Omega_2$ holds with probability $1$, and hence the event $\Omega := \Omega_1 \cap \Omega_2$ holds with probability $1$.  

Fix a realization $\omega \in \Omega$.  We will show that $\{ \mu_n \}_{n \geq 1}$ converges weakly to $\mu$ for this fixed realization $\omega$.  Let $\{\mu_{n_k}\}_{k \geq 1}$ be an arbitrary subsequence.  Since $\{ \mu_n \}_{n \geq 1}$ is tight, $\{\mu_{n_k}\}_{k \geq 1}$ is also tight and there exists a further subsequence $\{ \mu_{n_{k_l}} \}_{l \geq 1}$ and a measure $\mu' \in \mathcal{P}(\mathbb{C})$ so that $\{ \mu_{n_{k_l}} \}_{l \geq 1}$ converges weakly to $\mu'$ as $l \to \infty$.  This implies that $\phi_{n_{k_l}} (s,t) \to \phi'(s,t)$ as $l \to \infty$ for all $s,t \in \mathbb{R}$, where 
\[ \phi'(s,t) := \int_{\mathbb{C}} e^{i (s \Re(z) + t \Im(z))} \d \mu'(z), \qquad s,t \in \mathbb{R} \]
is the characteristic function of $\mu'$.  However, it follows that $\phi_{n_{k_l}} (s,t) \to \phi(s,t)$ as $l \to \infty$ for all $s, t \in \mathbb{Q}$.  By continuity of the characteristic functions $\phi$ and $\phi'$, it must be the case that $\phi(s,t) = \phi'(s,t)$ for all $s,t \in \mathbb{R}$, and hence $\mu = \mu'$.  

Therefore, we have shown that every subsequence of $\{ \mu_n \}_{n \geq 1}$ has a further subsequence that converges weakly to $\mu$.  It follows from Theorem 2.6 in \cite{MR1700749} that $\{ \mu_n \}_{n \geq 1}$ converges weakly to $\mu$ as $n \to \infty$ for any fixed realization $\omega \in \Omega$.  Since $\Omega$ holds with probability $1$, the conclusion in \eqref{eq:convprob1} follows.  
\end{proof}

\bibliographystyle{abbrv}
\bibliography{library}

\end{document}